\newtheorem{theorem}{Theorem}[section]
\newtheorem{lemma}[theorem]{Lemma}
\newtheorem{corollary}[theorem]{Corollary}
\newenvironment{proof}[1][Proof]{\begin{trivlist}
\item[\hskip \labelsep {\bfseries #1}]}{\end{trivlist}}
\newenvironment{definition}[1][Definition]{\begin{trivlist}
\item[\hskip \labelsep {\bfseries #1}]}{\end{trivlist}}
\newenvironment{remark}[1][Remark]{\begin{trivlist}
\item[\hskip \labelsep {\bfseries #1}]}{\end{trivlist}}
\journal{Journal}
\begin{document}

\begin{frontmatter}



\title{Optimal preconditioners for systems defined by functions of Toeplitz matrices}

\author{Sean Hon\fnref{label1}}

\address{Mathematical Institute, University of Oxford, Radcliffe Observatory Quarter, Oxford, OX2 6GG,
United Kingdom}
\ead{hon@maths.ox.ac.uk}
\fntext[label1]{The research of the author was partially supported by Croucher Foundation.}

\begin{abstract}
We propose several circulant preconditioners for systems defined by some functions $g$ of Toeplitz matrices $A_n$. In this paper we are interested in solving $g(A_n)\mathbf{x}=\mathbf{b}$ by the preconditioned conjugate method or the preconditioned minimal residual method, namely in the cases when $g(z)$ are the functions $e^{z}$, $\sin{z}$ and $\cos{z}$. Numerical results are given to show the effectiveness of the proposed preconditioners.
\end{abstract}

\begin{keyword}
Toeplitz matrices \sep Functions of matrices \sep Circulant preconditioners \sep PCG \sep PMINRES  

\MSC[] 65F08 \sep 65F10 \sep 65F15 \sep 15A16 \sep 15B05
\end{keyword}

\end{frontmatter}



\section{Introduction}

Motivated by \cite{Jin2014224} in which the authors proposed some optimal preconditioners for certain functions of general matrices, we show that $g(c(A_n))$, where $c(A_n)$ is the optimal circulant preconditioner for $A_n$ first proposed in \cite{doi:10.1137/0909051}, is an effective preconditioner for $g(A_n)$. Specifically we are interested in the cases when $g(z)$ are the trigonometric functions $e^{z}$, $\sin{z}$ and $\cos{z}$. 

A crucial application of $e^{A_n}$, for example, arises from the discretisation of integro-differential equations with a shift-invariant kernel \cite{2016arXiv160701733K}. Solving those equations is often required in areas such as the option pricing \cite{duffy2013finite, SACHS20081687}. Related work on computing the exponential of a block Toeplitz matrix arising in approximations of Markovian fluid queues can also be found in \cite{BINI2016387}.

Over the past few decades, preconditioning for Toeplitz matrices with circulant matrices has been extensively studied. Strang \cite{Strang:1986:PTM:12330.12335} and Olkin \cite{OLKIN1986} were the first to propose using circulant matrices as preconditioners in this context. Theoretical results that guarantee fast convergence with circulant preconditioners were later given by \cite{doi:10.1137/0910009}. Other circulant preconditioners such as optimal preconditioners \cite{doi:10.1137/0909051}, Huckle's preconditioners \cite{doi:10.1137/0613048} and superoptimal preconditioners \cite{doi:10.1137/0613030} were then developed for certain classes of Hermitian and positive definite Toeplitz matrices generated by positive functions $f$. The restriction on $f$ was later relaxed for example in \cite{Chan:1996:CGM:240441.240445, Capizzano1999SuperlinearPM, doi:10.1137/0916041}. Some work had also been done on preconditioning for Hermitian indefinite Toeplitz systems \cite{doi:10.1137/S0895479899362521}, non-Hermitian Toeplitz systems \cite{MR2125632} and nonsymmetric Toeplitz systems \cite{POTTS1998265}. For references on the development of preconditioning of Toeplitz matrices we refer to \cite{MR2108963, MR2376196}.

Throughout this work we consider the case when $f$ is a $2\pi$-periodic continuous complex-valued function as analysed in \cite{doi:10.1137/0730062}, thus the corresponding Toeplitz matrix $A_n[f]$ is in general complex and non-Hermitian. Consequently $g(A_n[f])$ is also a non-Hermitian complex matrix. We let $c_n[f]$ be the optimal circulant preconditioner \cite{doi:10.1137/0909051} derived from $A_n[f]$. Using $g(c_n[f])$ as the preconditioner, we can then apply CG to the normal equations system  \[ [g(c_n[f])^{-1} g(A_n[f])]^{*}[g(c_n[f])^{-1} g(A_n[f])]\mathbf{x}=[g(c_n[f])^{-1} g(A_n[f])]^{*}g(c_n[f])^{-1}\mathbf{b}. \] We also consider the special case in which we can use MINRES for the Hermitian indefinite $g(A_n[f])$ with the preconditioner $g(c_n[f])$.

Given a circulant matrix $C_n$, we remark that $g(C_n)$ is also a circulant matrix. By the diagonalisation ${C_n=F_n^{*}\Lambda_n F_n}$, where $F_n$ is a Fourier matrix \cite{MR543191} in which the entries are given by $[F_n]_{jk}=\frac{1}{\sqrt{n}}e^{-2\pi \mathbf{i} j k/n}$ with $j,k=0,1,\dots,n-1$, we have \[ g(C_n) = F_n^{*} g(\Lambda)F_n. \] Therefore, for any vector $\mathbf{d}$ the product $g(C_n)^{-1}\mathbf{d}$ can by efficiently computed by several Fast Fourier Transforms (FFTs) in $\mathcal{O}(n\log{n})$ operations \cite{MR735963}.

It must be noted however that fast matrix vector multiplication with the matrix $g(A_n[f])$ is not readily archived by circulant embedding. For $e^{A_n[f]}$, the matrix vector multiplication can be computed efficiently for example by a fast algorithm in $\mathcal{O}(n\log{n})$ operations \cite{doi:10.1137/090758064}.

This paper is outlined as follows. In section $2$ we first provide some lemmas on bounds for the spectra of $A_n[f]$ and $c_n[f]$. In section $3$ we also give several lemmas on functions of matrices. In section $4$ we provide the main results on the preconditioned matrix $g(c_n[f])^{-1}g(A_n[f])$. In section 5 we present numerical results to demonstrate the effectiveness of the proposed preconditioners.

\section{Spectra of $c_{n}[f]$ and $A_{n}[f]$ }

Denote by $\mathcal{C}_{2\pi}$ the Banach space of all $2\pi$-periodic continuous complex-valued functions equipped with the supremum norm $\|\cdot\|_{\infty}$. For all $f \in \mathcal{C}_{2\pi}$, we let \[ a_{k}=\frac{1}{2\pi} \int_{-\pi} ^{\pi}f(\theta) e^{-\mathbf{i} k \theta } \,d\theta,\quad k=0,\pm1,\pm2,\dots,  \] be the Fourier coefficients of $f$. Let $A_{n}[f]$ be the $n$-by-$n$ complex Toeplitz matrix with the $(j,k)$-th entry given by $a_{j-k}$. The function $f$ is called the \emph{generating function} of the matrix $A_n[f]$.
We then have
 \[ A_{n}[f]=\begin{bmatrix}{}
a_0 & a_{-1} & \cdots & a_{-n+2} & a_{-n+1} \\
a_1 & a_0 & a_{-1}   &  & a_{-n+2} \\
\vdots & a_1 & a_0 & \ddots & \vdots \\
a_{n-2} &  & \ddots & \ddots & a_{-1} \\
a_{n-1} & a_{n-2} &\cdots & a_1 & a_0
\end{bmatrix}. \]

We also let ${c}_n[f]$ be the $n$-by-$n$ optimal circulant preconditioner \cite{doi:10.1137/0909051} for ${A}_n[f]$, namely
 \[ c_{n}[f]=\begin{bmatrix}{}
c_0 & c_{n-1} & \cdots & c_{2} & c_{1} \\
c_1 & c_0 & c_{n-1}  &  & c_{2} \\
\vdots & c_1 & c_0 & \ddots & \vdots \\
c_{n-2} &  & \ddots & \ddots & c_{n-1} \\
c_{n-1} & c_{n-2} &\cdots & c_1 & c_0
\end{bmatrix} \] defined by 

\[ c_{k}= \begin{cases}
\frac{(n-k)a_k + k a_{k-n}}{n}, ~~0\leq k < n,\\
c_{n+k}, ~~-n < k < 0.\\
\end{cases}\]

\begin{lemma}\cite[Lemma 1 and 3]{doi:10.1137/0730062}
\label{lem:TCBoound}
If $f \in \mathcal{C}_{2\pi}$ we have

\[\|A_{n}[f]\|_2 \leq 2\|f\|_{\infty}~~\text{and}~~
\|c_{n}[f]\|_2 \leq 2\|f\|_{\infty}
\quad n=1,2,\dots.
\]\end{lemma}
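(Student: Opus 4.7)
The plan is to reduce each bound to the case of a real generating function by writing $f = f_R + \mathbf{i} f_I$, where $f_R, f_I \in \mathcal{C}_{2\pi}$ are real, and exploiting the linearity of both constructions in the Fourier coefficients. Since the map $f \mapsto a_k$ is $\mathbb{C}$-linear, one obtains $A_n[f] = A_n[f_R] + \mathbf{i}\, A_n[f_I]$ and, by inspecting the defining formula for $c_k$, also $c_n[f] = c_n[f_R] + \mathbf{i}\, c_n[f_I]$. By the triangle inequality it then suffices to prove $\|A_n[g]\|_2 \le \|g\|_\infty$ and $\|c_n[g]\|_2 \le \|g\|_\infty$ for real $g$, since $\|f_R\|_\infty, \|f_I\|_\infty \le \|f\|_\infty$ will yield the factor of $2$.

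For the Toeplitz bound, I would note that real $g$ makes $A_n[g]$ Hermitian (because $a_{-k} = \overline{a_k}$), so $\|A_n[g]\|_2 = \sup_{\|x\|=1}|x^* A_n[g] x|$. The key identity, obtained by expanding the quadratic form, is
\[
x^* A_n[g] x \;=\; \frac{1}{2\pi}\int_{-\pi}^{\pi} g(\theta)\,|q_x(\theta)|^2\,d\theta,\qquad q_x(\theta) := \sum_{k=0}^{n-1} x_k e^{\mathbf{i} k\theta},
\]
from which $|x^* A_n[g] x| \le \|g\|_\infty \cdot \frac{1}{2\pi}\int |q_x|^2 \,d\theta = \|g\|_\infty \|x\|_2^2$ by Parseval. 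This immediately gives $\|A_n[g]\|_2 \le \|g\|_\infty$.

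For the circulant bound I would compute the eigenvalues of $c_n[g]$ directly. Since $c_n[g]$ is circulant it is diagonalised by $F_n$, and its eigenvalues are the DFT of the first column. Substituting $c_k = \frac{(n-k)a_k + k a_{k-n}}{n}$ and re-indexing the tail into negative frequencies, the $j$-th eigenvalue simplifies to
\[
\lambda_j \;=\; \sum_{|k|<n}\Bigl(1-\tfrac{|k|}{n}\Bigr)\,a_k\,e^{-2\pi \mathbf{i} j k/n},
\]
which is exactly the $n$-th Fejér sum of $g$ evaluated at $\theta_j = 2\pi j/n$. Because the Fejér kernel is non-negative with mean one, $\|\sigma_n g\|_\infty \le \|g\|_\infty$, so $|\lambda_j| \le \|g\|_\infty$; normality of $c_n[g]$ then gives $\|c_n[g]\|_2 \le \|g\|_\infty$. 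Combining with the splitting yields both inequalities with the stated constant $2\|f\|_\infty$.

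The main obstacle, if any, is the bookkeeping in rewriting the DFT of the first column of $c_n[g]$ as a Fejér sum; the index manipulation of the wrap-around term $a_{k-n}$ has to be done carefully so that the weights $(1-|k|/n)$ appear symmetrically over $|k|<n$. Once this identification is made, the rest is a direct application of Parseval and the positivity of the Fejér kernel.
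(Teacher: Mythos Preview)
The paper does not give its own proof of this lemma; it simply quotes Lemmas~1 and~3 of the cited reference (Chan--Yeung, \emph{SIAM J.\ Numer.\ Anal.}\ 30 (1993)). Your argument is correct and is in fact the same device used there: the constant $2$ arises precisely from writing $f=f_R+\mathbf{i}f_I$ and applying the Hermitian bound $\|A_n[g]\|_2\le\|g\|_\infty$ (via the Rayleigh-quotient/Parseval identity) and the Fej\'er-mean bound $\|c_n[g]\|_2\le\|g\|_\infty$ to each real part separately. As a side remark, the Fej\'er-kernel step you wrote already works for complex $g$, so the circulant estimate $\|c_n[f]\|_2\le\|f\|_\infty$ actually holds without the factor $2$; the splitting is only needed for the Toeplitz part in your approach (and even there one can get $\|f\|_\infty$ by viewing $A_n[f]$ as a compression of the multiplication operator $M_f$), but the weaker bound with $2\|f\|_\infty$ is all the paper states and uses.
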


Lemma \ref{lem:TCBoound} states that the $2$-norm of the circulant matrix and that of the Toeplitz matrix generated by $f$ are bounded by a constant which is independent of $n$.

\begin{lemma}\cite[Theorem~1]{doi:10.1137/0730062}
\label{lem:polyCT}
Let $f \in \mathcal{C}_{2\pi}$. Then for all $\epsilon>0$ there exists a positive integer $M>0$ such that for $n>2M$, we have
\[c_n[p_M] - A_n[p_M] = U_n - W_n,\] where  $p_M$ is a trigonometric polynomial such that $\|f-p_M\|_{\infty}<\epsilon$, $A_n[p_M]\in \mathbb{C}^{n \times n}$ is the Toeplitz matrix generated by $p_M$, $c_n[p_M]\in \mathbb{C}^{n \times n}$ is the optimal circulant preconditioner for $A_n[p_M]$, 
\[ \text{rank}~U_n \leq 2M\] and \[ \|W_n\|_2 \leq \frac{1}{n}M(M+1)(\epsilon+\|f\|_{\infty}). \]

\end{lemma}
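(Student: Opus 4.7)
The plan is to exhibit the decomposition explicitly by entrywise inspection of $c_n[p_M]-A_n[p_M]$. First, by the classical Weierstrass approximation theorem for continuous $2\pi$-periodic functions, for any $\epsilon>0$ there is an $M$ and a trigonometric polynomial $p_M(\theta)=\sum_{|\ell|\le M} b_\ell e^{\mathbf{i}\ell\theta}$ with $\|f-p_M\|_\infty<\epsilon$. Its Fourier coefficients satisfy $|b_\ell|\le\|p_M\|_\infty\le\|f\|_\infty+\epsilon$ by the standard integral estimate. I would fix such an $M$ and $p_M$ and then work directly with the entries of $A_n[p_M]$, a banded Toeplitz matrix of bandwidth $M$, and of $c_n[p_M]$, read off from the explicit formula for the optimal circulant preconditioner.

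Next I would compute $c_n[p_M]-A_n[p_M]$ entry-by-entry under the assumption $n>2M$, which decouples the band contributions from the wrap-around contributions. On the diagonal $j-k=\ell$ with $0<\ell\le M$, the circulant entry reduces to $\tfrac{(n-\ell)b_\ell}{n}$, because $n>2M$ forces $b_{\ell-n}=0$; the Toeplitz entry is $b_\ell$, giving the difference $-\tfrac{\ell b_\ell}{n}$. Analogous calculations cover $-M\le\ell<0$, the wrap-around diagonals $n-M\le|\ell|\le n-1$, and the trivial case $M<|\ell|<n-M$ in which both matrices vanish. I would then let $W_n$ collect the contributions on the main $2M+1$ diagonals and let $U_n$ collect the wrap-around contributions, which are supported in the top-right and bottom-left triangular corners contained in $M\times M$ blocks, so that $c_n[p_M]-A_n[p_M]=U_n-W_n$ holds by construction.

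The two bounds now follow from this explicit form. Since the support of $U_n$ lies in the top-right $M\times M$ block together with the bottom-left $M\times M$ block, one has $\mathrm{rank}\,U_n\le 2M$. For $W_n$, each row and each column has entries on at most $2M$ diagonals, and the entry on diagonal $\ell$ has modulus at most $\tfrac{|\ell|(\|f\|_\infty+\epsilon)}{n}$. Bounding row and column sums and applying $\|W_n\|_2\le\sqrt{\|W_n\|_1\|W_n\|_\infty}$ yields $\|W_n\|_2\le\tfrac{2(\|f\|_\infty+\epsilon)}{n}\sum_{\ell=1}^M \ell=\tfrac{M(M+1)(\|f\|_\infty+\epsilon)}{n}$, as required. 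The main obstacle will be the careful bookkeeping in the four cases (positive band, negative band, positive wrap-around, negative wrap-around) and verifying that the hypothesis $n>2M$ genuinely produces the clean splitting above; once that case analysis is in place, both estimates drop out directly.
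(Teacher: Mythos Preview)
Your proposal is correct and follows essentially the same approach as the paper. Although the paper cites this lemma from \cite{doi:10.1137/0730062} rather than proving it in place, it reproduces the explicit matrices $U_n$ and $W_n$ inside the proof of Theorem~\ref{thm:aminCTdecomp_chap5}, and they are exactly your decomposition: $U_n$ carries the wrap-around entries $\tfrac{n-|\ell|}{n}\rho_{\pm\ell}$ in the two $M\times M$ corner blocks (whence $\mathrm{rank}\,U_n\le 2M$), while $W_n$ is the banded Toeplitz matrix with entries $\tfrac{|\ell|}{n}\rho_\ell$ on diagonal $\ell$ for $|\ell|\le M$, whose row/column sums give the stated $2$-norm bound via $\|W_n\|_2\le\sqrt{\|W_n\|_1\|W_n\|_\infty}$.
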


Lemma \ref{lem:polyCT} indicates that the difference between the circulant matrix and the Toeplitz matrix generated by a trigonometric approximation to $f$ can be decomposed into the sum of a matrix of low rank and a matrix of small norm. In the next section, this lemma is used to prove that the difference between the matrix exponential of a circulant matrix and that of a Toeplitz matrix can also be decomposed in a similar fashion.

\section{Preliminaries on matrix functions}

In this section we introduce the preliminaries that will be used in the following section.

\begin{definition}\cite{MR2396439}
For any $A_n \in \mathbb{C}^{n\times n}$,

\[e^{A_n} = I_n+A_n+ \frac{1}{2!}A_n^2 + \frac{1}{3!}A_n^3 + \cdots,\]
\[\cos{A_n} = I_n-\frac{1}{2!}A_n^2 + \frac{1}{4!}A_n^4 - \frac{1}{6!}A_n^6 + \cdots\] and
\[\sin{A_n} = A_n-\frac{1}{3!}A_n^3 + \frac{1}{5!}A_n^5 - \frac{1}{7!}A_n^7 + \cdots.\]

\end{definition}

\begin{lemma}\cite[Theorem~10.2]{MR2396439}
\label{lemma:exp_plus_product}
For any $A_n, B_n\in \mathbb{C}^{n\times n}$
\[e^{(A_n+B_n)t} = e^{A_nt}e^{B_nt}\] for all $t$ if and only if $A_nB_n = B_nA_n$.
\end{lemma}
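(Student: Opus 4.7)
The plan is to handle the two implications separately and to lean on the power series definition of $e^{A_n}$ given just above.

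For the ``if'' direction, assume $A_nB_n=B_nA_n$. I would expand
\[
e^{A_nt}e^{B_nt}=\left(\sum_{j\ge 0}\frac{(A_nt)^j}{j!}\right)\left(\sum_{k\ge 0}\frac{(B_nt)^k}{k!}\right)=\sum_{m\ge 0}t^m\sum_{j+k=m}\frac{A_n^j B_n^k}{j!\,k!},
\]
where termwise rearrangement is legitimate because both series converge absolutely in any submultiplicative matrix norm. Because $A_n$ and $B_n$ commute, the binomial theorem applies to $(A_n+B_n)^m$, giving $\sum_{j+k=m}\binom{m}{j}A_n^j B_n^k=(A_n+B_n)^m$, so the inner sum equals $(A_n+B_n)^m/m!$, and the total equals $e^{(A_n+B_n)t}$.

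For the ``only if'' direction, assume the identity holds for all $t\in\mathbb{R}$. Both sides are entire in $t$, so I may differentiate. At $t=0$ the first derivative gives only $A_n+B_n=A_n+B_n$ and is uninformative, so I would push to the second derivative. Differentiating
\[
\frac{d}{dt}\bigl(e^{A_nt}e^{B_nt}\bigr)=A_n e^{A_nt}e^{B_nt}+e^{A_nt}B_n e^{B_nt}
\]
once more and evaluating at $t=0$ yields $A_n^2+2A_nB_n+B_n^2$, while the left side produces $(A_n+B_n)^2=A_n^2+A_nB_n+B_nA_n+B_n^2$. Equating the two gives $A_nB_n=B_nA_n$.

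There is no real obstacle; the only subtle point is justifying the termwise rearrangement in the ``if'' direction, which follows from absolute convergence of the two matrix exponential series, and making sure not to commute $A_n$ and $B_n$ when differentiating $e^{A_nt}e^{B_nt}$ in the ``only if'' direction. Everything else is a direct computation using the series definition already stated in the paper.
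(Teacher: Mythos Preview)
The paper does not supply its own proof of this lemma; it simply cites it as Theorem~10.2 of Higham's \emph{Functions of Matrices}. So there is nothing in the paper to compare your argument against.

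Your proof is correct on its own terms. The Cauchy-product manipulation in the ``if'' direction is justified by absolute convergence in any submultiplicative norm, and commutativity is exactly what makes the binomial identity $\sum_{j+k=m}\binom{m}{j}A_n^jB_n^k=(A_n+B_n)^m$ valid. In the ``only if'' direction your second-derivative computation is right: differentiating $e^{A_nt}e^{B_nt}$ twice and evaluating at $t=0$ gives $A_n^2+2A_nB_n+B_n^2$, and matching this with $(A_n+B_n)^2$ forces $A_nB_n=B_nA_n$. You were careful not to slide $B_n$ past $e^{A_nt}$, which is the only place one could go wrong. An equivalent and slightly cleaner way to phrase that half is to compare the coefficients of $t^2$ in the two power series expansions directly, which bypasses the differentiation bookkeeping; the content is identical.
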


\begin{definition}\cite{MR2396439}
Given a vector norm on $\mathbb{C}^n$, the corresponding \emph{subordinate matrix norm} is defined by
\[\|A_n\| = \max_{x\neq0} \frac{\|A_n x\|}{\|x\|}.\]
\end{definition}

\begin{definition}\cite{MR2396439}
The norm $\|\cdot\|$ is called \emph{consistent} if 
\[\|AB\| \leq \|A\|\|B\|\] for all $ A \in \mathbb{C}^{m\times n}$ and $B\in \mathbb{C}^{n\times p}$.
\end{definition}

\begin{lemma}\cite[Problem~10.3]{MR2396439}
\label{lemma:expmbound}
For any subordinate matrix norm and any $A_n, B_n\in \mathbb{C}^{n\times n}$, we have
\[
\|e^{A_n} - e^{B_n} \| \leq \|A_n-B_n\| e^{\max{(\|A_n\|,\|B_n\|)}}.
\] 
\end{lemma}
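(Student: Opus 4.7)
The plan is to start from the integral identity
\[ e^{A_n} - e^{B_n} = \int_0^1 \frac{d}{dt}\bigl[e^{tA_n}\,e^{(1-t)B_n}\bigr]\,dt, \]
which is just the fundamental theorem of calculus applied to the matrix-valued function $\varphi(t) = e^{tA_n}\,e^{(1-t)B_n}$, since $\varphi(1)-\varphi(0) = e^{A_n}-e^{B_n}$. Computing the derivative gives
\[ \varphi'(t) = A_n\,e^{tA_n}\,e^{(1-t)B_n} - e^{tA_n}\,B_n\,e^{(1-t)B_n} = e^{tA_n}(A_n-B_n)\,e^{(1-t)B_n}, \]
where I use only that $A_n$ commutes with its own exponential $e^{tA_n}$ (both are power series in $A_n$). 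Critically, no commutativity between $A_n$ and $B_n$ is assumed, so Lemma \ref{lemma:exp_plus_product} is not invoked.

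With the representation $e^{A_n}-e^{B_n}=\int_0^1 e^{tA_n}(A_n-B_n)\,e^{(1-t)B_n}\,dt$ in hand, I would take norms under the integral and apply consistency of the subordinate matrix norm to obtain
\[ \|e^{A_n}-e^{B_n}\| \;\leq\; \|A_n-B_n\|\int_0^1 \|e^{tA_n}\|\,\|e^{(1-t)B_n}\|\,dt. \]
A routine use of the triangle inequality and submultiplicativity on the exponential series yields $\|e^{tA_n}\| \leq e^{t\|A_n\|}$ and $\|e^{(1-t)B_n}\| \leq e^{(1-t)\|B_n\|}$. Setting $M=\max(\|A_n\|,\|B_n\|)$, the product of the two factors is bounded pointwise by $e^{tM}e^{(1-t)M}=e^{M}$, which is independent of $t$. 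Integrating over $[0,1]$ then delivers the stated bound $\|A_n-B_n\|\,e^{\max(\|A_n\|,\|B_n\|)}$.

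The only place where care is required is the derivative step: when differentiating $e^{tA_n}$, the factor $A_n$ must be kept sandwiched between $e^{tA_n}$ on the left and $e^{(1-t)B_n}$ on the right (exploiting $A_n e^{tA_n}=e^{tA_n}A_n$), rather than moved past $e^{(1-t)B_n}$, which would illegitimately demand $A_nB_n=B_nA_n$. Beyond this bookkeeping, the argument is standard estimation of a matrix-valued integral, and I expect no genuine obstacle.
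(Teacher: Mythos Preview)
Your argument is correct and is the standard proof of this inequality. The paper itself does not supply a proof: it merely quotes the result from \cite[Problem~10.3]{MR2396439}, so there is no in-paper argument to compare against. Your integral-representation approach is in fact the intended solution to that problem in Higham's book, so you have reproduced exactly what the cited reference has in mind.
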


\begin{lemma}\cite[Theorem~10.1]{MR2396439}
\label{lem:suzuki}
For $A_n \in \mathbb{C}^{n\times n}$, let \[ P_{r,s}=[\sum_{i=0}^{r} \frac{1}{i!}(\frac{A_n}{s})]^s.\]
Then for any consistent matrix norm \[ \|e^{A_n} - P_{r,s}\| \leq \frac{\|A_n\|^{r+1}}{s^r(r+1)!}e^{\|A_n\|}\] and $$\lim_{r\to\infty}P_{r,s}= \lim_{s\to\infty}P_{r,s}=e^{A_n}.$$
\end{lemma}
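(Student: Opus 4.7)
The plan is to exploit the commutativity identity $e^{A_n} = (e^{A_n/s})^s$, which follows from Lemma \ref{lemma:exp_plus_product} applied to $A_n/s$ with itself (split as the sum of $s$ copies of $A_n/s$). Since $P_{r,s}$ is by definition the $s$-th power of the degree-$r$ Taylor polynomial of $e^{A_n/s}$, I would first bound the per-factor truncation error and then lift the bound through a telescoping identity to the $s$-th power.

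For the single-factor estimate, write $T_r(X) = \sum_{i=0}^{r} X^i/i!$ so that $e^X - T_r(X) = \sum_{i \geq r+1} X^i/i!$. Taking norms, using consistency, factoring $\|X\|^{r+1}/(r+1)!$ out of the tail, and comparing what remains with the series for $e^{\|X\|}$ yields
\[
\|e^{A_n/s} - T_r(A_n/s)\| \;\leq\; \frac{\|A_n\|^{r+1}}{s^{r+1}(r+1)!}\, e^{\|A_n\|/s}.
\]

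Next I would apply the telescoping identity $B^s - C^s = \sum_{k=0}^{s-1} B^{s-1-k}(B-C)C^k$ with $B = e^{A_n/s}$ and $C = T_r(A_n/s)$. By consistency, together with the standard bounds $\|e^{A_n/s}\| \leq e^{\|A_n\|/s}$ and $\|T_r(A_n/s)\| \leq T_r(\|A_n\|/s) \leq e^{\|A_n\|/s}$, each of the $s$ summands is bounded in norm by $e^{(s-1)\|A_n\|/s}$ times the single-factor estimate above, giving
\[
\|e^{A_n} - P_{r,s}\| \;\leq\; s\cdot e^{(s-1)\|A_n\|/s}\cdot \frac{\|A_n\|^{r+1}}{s^{r+1}(r+1)!}\, e^{\|A_n\|/s} \;=\; \frac{\|A_n\|^{r+1}}{s^r(r+1)!}\, e^{\|A_n\|},
\]
which is the claimed inequality. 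The limit assertions then follow immediately from this bound: with $s$ fixed, $\|A_n\|^{r+1}/(r+1)! \to 0$ as $r \to \infty$; with $r \geq 1$ fixed, the factor $s^r$ in the denominator drives the bound to $0$ as $s \to \infty$.

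The main obstacle is the telescoping step: because $B$ and $C$ do not commute in general (nor is it assumed that the norm is unitarily invariant), one must keep $B^{s-1-k}$ on the left and $C^k$ on the right of the factor $(B-C)$ throughout, and rely only on submultiplicativity of the consistent norm to separate the three factors. Once this is handled correctly, the rest is a purely scalar computation.
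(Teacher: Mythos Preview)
The paper does not prove this lemma; it is quoted from \cite{MR2396439} and used as a black box in the proof of Theorem~\ref{thm:aminCTdecomp_chap5}. Your argument is correct and is essentially the standard proof: bound the tail of the exponential series for the single factor $e^{A_n/s}-T_r(A_n/s)$, then lift to the $s$-th power via the telescoping identity $B^s-C^s=\sum_k B^{s-1-k}(B-C)C^k$ and submultiplicativity.

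One minor caveat: the ``standard'' bounds $\|e^{A_n/s}\|\le e^{\|A_n\|/s}$ and $\|T_r(A_n/s)\|\le e^{\|A_n\|/s}$ that you invoke actually require $\|I\|=1$, which holds for subordinate norms (cf.\ Lemma~\ref{lem:expmAbd}) but not for an arbitrary consistent norm as in the lemma's hypothesis (e.g.\ $\|I_n\|_F=\sqrt{n}$). This is harmless for the paper, which only ever applies the result with the $2$-norm. If you want the stated generality, observe that $B$, $C$ and $B-C$ are all power series in $A_n/s$ with nonnegative coefficients, and $B^s-C^s$ has vanishing constant term; hence for any consistent norm $\|e^{A_n}-P_{r,s}\|$ is bounded by the scalar quantity $e^{\|A_n\|}-\big(T_r(\|A_n\|/s)\big)^s$, to which your telescoping estimate applies verbatim.
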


\begin{lemma}\cite[Theorem~10.10]{MR2396439}
\label{lem:expmAbd}
For $A_n \in \mathbb{C}^{n\times n}$ and any subordinate matrix norm,
\[
e^{-\|A_n\|}\leq\|e^{A_n}\| \leq e^{\|A_n\|}\quad n=1,2,\dots.
\]
\end{lemma}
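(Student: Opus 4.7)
The plan is to prove the upper bound directly from the power series definition, and the lower bound by exploiting the identity $e^{A_n}e^{-A_n}=I_n$ together with the upper bound applied to $-A_n$.

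For the upper bound, I would start from the definition $e^{A_n}=\sum_{k=0}^{\infty}\frac{A_n^k}{k!}$. Since any subordinate matrix norm is consistent (submultiplicative), it satisfies $\|A_n^k\|\leq\|A_n\|^k$. Applying the triangle inequality to the partial sums and passing to the limit gives
\[
\|e^{A_n}\|\leq\sum_{k=0}^{\infty}\frac{\|A_n\|^k}{k!}=e^{\|A_n\|},
\]
which is the right-hand inequality. One small point I would verify is convergence of the series in norm, which is immediate from the comparison with the scalar exponential series.

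For the lower bound, the key observation is that $A_n$ and $-A_n$ commute, so Lemma~\ref{lemma:exp_plus_product} (taking $B_n=-A_n$ and $t=1$) yields $e^{A_n}e^{-A_n}=e^{0}=I_n$. Because the norm is subordinate, $\|I_n\|=\max_{x\neq 0}\|I_nx\|/\|x\|=1$. Using consistency again,
\[
1=\|I_n\|=\|e^{A_n}e^{-A_n}\|\leq\|e^{A_n}\|\,\|e^{-A_n}\|\leq\|e^{A_n}\|\,e^{\|-A_n\|}=\|e^{A_n}\|\,e^{\|A_n\|},
\]
where I have applied the already-proved upper bound to $-A_n$ and used $\|-A_n\|=\|A_n\|$. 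Rearranging gives $\|e^{A_n}\|\geq e^{-\|A_n\|}$, completing the proof.

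There is no serious obstacle here; the only points needing care are (i) invoking consistency for the subordinate norm to handle $\|A_n^k\|\leq\|A_n\|^k$ and $\|e^{A_n}e^{-A_n}\|\leq\|e^{A_n}\|\|e^{-A_n}\|$, and (ii) justifying $\|I_n\|=1$, which is automatic for subordinate norms but would fail for a general consistent norm, so it is essential that the hypothesis is subordinate rather than merely consistent.
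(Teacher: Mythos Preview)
Your proof is correct. The paper does not supply its own proof of this lemma; it simply cites Theorem~10.10 of Higham's \emph{Functions of Matrices} (reference~\cite{MR2396439}). Your argument---the power-series estimate with submultiplicativity for the upper bound, and the identity $e^{A_n}e^{-A_n}=I_n$ together with $\|I_n\|=1$ for the lower bound---is exactly the standard proof found in that reference, including the observation that the hypothesis of a subordinate norm (rather than merely a consistent one) is needed to ensure $\|I_n\|=1$.
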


\begin{lemma}\cite{doi:10.1137/140974213}
For any circulant matrix $C_n \in \mathbb{C}^{n\times n}$, the \emph{absolute value circulant matrix} $|C_n|$ is defined to be
\[
|C_n| = (C_n^* C_n)^{\frac{1}{2}}=(C_n C_n^*)^{\frac{1}{2}}=F_n^*|\Lambda_n|F_n,
\] where $|\Lambda_n|$ is the diagonal matrix in the eigenvalue decomposition of $C_n$ with all entries replaced by their magnitudes.
\end{lemma}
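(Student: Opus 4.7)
The plan is to exploit two classical facts: the Fourier matrix $F_n$ is unitary (so $F_n F_n^* = F_n^* F_n = I_n$), and any Hermitian positive semidefinite matrix has a unique Hermitian positive semidefinite square root.

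First, I would start from the spectral decomposition $C_n = F_n^* \Lambda_n F_n$ of the circulant matrix, which is recalled in the introduction of the paper. A direct computation then gives
\[
C_n^* C_n = F_n^* \Lambda_n^* F_n \, F_n^* \Lambda_n F_n = F_n^* (\Lambda_n^* \Lambda_n) F_n,
\qquad
C_n C_n^* = F_n^* (\Lambda_n \Lambda_n^*) F_n.
\]
Because $\Lambda_n = \mathrm{diag}(\lambda_0,\dots,\lambda_{n-1})$ is diagonal, the entrywise identity $\overline{\lambda_k}\lambda_k = \lambda_k \overline{\lambda_k} = |\lambda_k|^2$ yields $\Lambda_n^* \Lambda_n = \Lambda_n \Lambda_n^* = |\Lambda_n|^2$, where $|\Lambda_n| := \mathrm{diag}(|\lambda_0|,\dots,|\lambda_{n-1}|)$. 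Hence $C_n^* C_n = C_n C_n^* = F_n^* |\Lambda_n|^2 F_n$, establishing the first equality in the claim.

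Second, I would verify that $M_n := F_n^* |\Lambda_n| F_n$ qualifies as the positive semidefinite square root of $C_n^* C_n$. Since $|\Lambda_n|$ has nonnegative real diagonal entries, it is Hermitian positive semidefinite; conjugation by the unitary $F_n$ preserves these properties, so $M_n$ is Hermitian positive semidefinite as well. A direct multiplication, using $F_n F_n^* = I_n$ once more, gives
\[
M_n^2 = F_n^* |\Lambda_n| F_n \, F_n^* |\Lambda_n| F_n = F_n^* |\Lambda_n|^2 F_n = C_n^* C_n.
\]
By the uniqueness of the positive semidefinite square root of a Hermitian positive semidefinite matrix, $(C_n^* C_n)^{1/2} = M_n = F_n^* |\Lambda_n| F_n$, which finishes the proof.

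There is not really a hard obstacle here: the statement is essentially a bookkeeping exercise combining the unitary diagonalization of circulants with the uniqueness of the positive semidefinite square root. The only point that requires mild care is appealing to uniqueness rather than simply exhibiting \emph{some} square root, since without uniqueness one cannot identify $(C_n^* C_n)^{1/2}$ with the specific matrix $F_n^* |\Lambda_n| F_n$.
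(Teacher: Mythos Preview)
Your argument is correct and is the standard verification. Note, however, that the paper does not actually supply a proof of this lemma: it is stated as a cited result (essentially a definition together with an elementary identity) and no argument is given. So there is nothing in the paper to compare your proof against; your write-up simply fills in the routine details the paper omits.
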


\section{Spectra of the preconditioned matrices }

In this section, for Hermitian $g(A_n[f])$ we show the preconditioned matrix $$g(c_n[f])^{-1}g(A_n[f])$$ can be decomposed into the sum of a unitary matrix, a matrix of low rank and a matrix of small norm for sufficiently large $n$ under some assumptions. For non-Hermitian $g(A_n[f])$, we consider its normal equations system and also show that the preconditioned matrix can also be decomposed in a similar way.

\subsection{Spectra of $(e^{c_n[f]})^{-1}e^{A_n[f]} $}

We first provide some lemmas concerning to the matrix exponential and then give the main results concerning to the preconditioned matrix $(e^{c_n[f]})^{-1}e^{A_n[f]} $.

\begin{corollary}
\label{coro:invexpmCBoound}
Let $f \in \mathcal{C}_{2\pi}$. Let $A_n[f]\in \mathbb{C}^{n \times n}$ be the Toeplitz matrix generated by $f$ and $c_n[f]\in \mathbb{C}^{n \times n}$ be the optimal circulant preconditioner for $A_n[f]$. We have
\[
\|(e^{c_{n}[f]})^{-1}\|_2 \leq e^{2\|f\|_{\infty}} \quad n=1,2,\dots.
\]
\end{corollary}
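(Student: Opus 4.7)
The statement is a direct consequence of two previously stated facts, and the plan is essentially to chain them together. The first step is to rewrite the inverse of the matrix exponential as the exponential of the negated argument. Since $c_n[f]$ commutes with itself, Lemma \ref{lemma:exp_plus_product} applied with $A_n = c_n[f]$ and $B_n = -c_n[f]$ gives $e^{c_n[f]} e^{-c_n[f]} = e^{0} = I_n$, so $(e^{c_n[f]})^{-1} = e^{-c_n[f]}$.

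The second step is to apply the upper bound from Lemma \ref{lem:expmAbd} with the spectral ($2$-)norm, which is a subordinate matrix norm: this yields
\[
\|(e^{c_n[f]})^{-1}\|_2 = \|e^{-c_n[f]}\|_2 \leq e^{\|-c_n[f]\|_2} = e^{\|c_n[f]\|_2}.
\]
Finally, invoking Lemma \ref{lem:TCBoound} gives $\|c_n[f]\|_2 \leq 2\|f\|_\infty$, and since the exponential function is monotonically increasing on $\mathbb{R}$, we obtain $\|(e^{c_n[f]})^{-1}\|_2 \leq e^{2\|f\|_\infty}$, uniformly in $n$.

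There is no real obstacle here, which is consistent with the result being stated as a corollary rather than a lemma or theorem. The only point that warrants care is justifying the identity $(e^{c_n[f]})^{-1} = e^{-c_n[f]}$; this must be noted explicitly so that Lemma \ref{lem:expmAbd} can legitimately be applied to the inverse. After that, the bound is a two-line computation.
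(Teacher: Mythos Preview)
Your proposal is correct and follows essentially the same approach as the paper's own proof: both use Lemma~\ref{lemma:exp_plus_product} to identify $(e^{c_n[f]})^{-1}=e^{-c_n[f]}$, then bound $\|e^{-c_n[f]}\|_2$ via Lemma~\ref{lem:expmAbd} and finally invoke Lemma~\ref{lem:TCBoound}. There is no substantive difference.
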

\begin{proof}
By Lemma \ref{lemma:exp_plus_product}, we have $$e^{c_{n}[f]} e^{-c_{n}[f]} =  e^{c_{n}[f]-c_{n}[f]} =I_n.$$ Thus $e^{-c_{n}[f]}$ is the inverse of $e^{c_{n}[f]}$. We then have $$\|(e^{c_{n}[f]})^{-1}\|_2 = \|e^{-c_{n}[f]}\|_2.$$ Using lemmas \ref{lem:TCBoound} and \ref{lem:expmAbd}, we have \[ \|e^{-c_{n}[f]}\|_2 \leq e^{\|c_n(f)\|_2} \leq e^{2\|f\|_{\infty}}. \]\qed
\end{proof}

We are now ready to give our main results on the spectrum of $(e^{c_n[f]})^{-1}e^{A_n[f]} $.

\begin{theorem}
\label{thm:aminCTdecomp_chap5}
Let $f \in \mathcal{C}_{2\pi}$. Let $A_n[f]\in \mathbb{C}^{n \times n}$ be the Toeplitz matrix generated by $f$ and $c_n[f]\in \mathbb{C}^{n \times n}$ be the optimal circulant preconditioner for $A_n[f]$. For all $\epsilon > 0$, there exist positive integers $N$ and $M$ such that for all $n > N$ \[ e^{c_n[f]} - e^{A_n[f]} = R_n[f] + E_n[f],\] where \[ \text{rank}~ R_n[f]\leq 2M, \] \[ \|E_n[f]\|_2 \leq \epsilon.\]
\end{theorem}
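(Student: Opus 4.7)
The strategy is to reduce to a trigonometric polynomial approximation via Lemma \ref{lem:polyCT} and then transfer the ``low rank plus small norm'' decomposition through a power-series expansion of $e^{A_n}$. Given $\epsilon > 0$, I would first pick a small $\delta > 0$ (to be tuned) and apply Lemma \ref{lem:polyCT} to obtain an integer $M_0$ and a trigonometric polynomial $p_{M_0}$ with $\|f - p_{M_0}\|_\infty < \delta$, together with the decomposition $c_n[p_{M_0}] - A_n[p_{M_0}] = U_n - W_n$ where $\text{rank}~U_n \leq 2M_0$ and $\|W_n\|_2 \leq M_0(M_0+1)(\delta+\|f\|_\infty)/n$ for $n > 2M_0$.

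Next, I would telescope
\[ e^{c_n[f]} - e^{A_n[f]} = \bigl(e^{c_n[f]} - e^{c_n[p_{M_0}]}\bigr) + \bigl(e^{c_n[p_{M_0}]} - e^{A_n[p_{M_0}]}\bigr) + \bigl(e^{A_n[p_{M_0}]} - e^{A_n[f]}\bigr). \]
The outer two differences will feed into $E_n[f]$: Lemma \ref{lem:TCBoound} applied to $f - p_{M_0}$ gives $\|c_n[f] - c_n[p_{M_0}]\|_2 \leq 2\delta$ and the analogous bound for $A_n$, and then Lemma \ref{lemma:expmbound} (combined with Lemma \ref{lem:TCBoound} to bound the exponent) controls each outer difference in $2$-norm by roughly $2\delta\, e^{2\|f\|_\infty + 2\delta}$, which is $O(\delta)$.

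The middle difference generates both pieces of the decomposition. Expanding the power series and splitting at some cutoff $K$,
\[ e^{c_n[p_{M_0}]} - e^{A_n[p_{M_0}]} = \sum_{k=1}^{K} \frac{1}{k!}\bigl(c_n[p_{M_0}]^k - A_n[p_{M_0}]^k\bigr) + T_K, \]
where the tail $T_K$ has norm at most $2\sum_{k>K}(2\|f\|_\infty + 2\delta)^k/k!$ and can be made $O(\delta)$ by choosing $K$ large. For each $k \leq K$, I would invoke the standard telescoping identity
\[ c_n[p_{M_0}]^k - A_n[p_{M_0}]^k = \sum_{j=0}^{k-1} c_n[p_{M_0}]^{k-1-j}\,(U_n - W_n)\,A_n[p_{M_0}]^j, \]
separating each summand into a $U_n$-part of rank at most $2M_0$ and a $W_n$-part of $2$-norm at most $(2\|f\|_\infty + 2\delta)^{k-1}\|W_n\|_2$.

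Finally, I would assemble $R_n[f]$ from all the $U_n$-parts across $k = 1, \ldots, K$, yielding $\text{rank}~R_n[f] \leq \sum_{k=1}^K 2M_0\,k = M_0 K(K+1)$; then take $M := \lceil M_0 K(K+1)/2 \rceil$. The remaining pieces (the two outer exponential differences, the tail $T_K$, and the $W_n$-parts from the middle) go into $E_n[f]$; the $W_n$-contribution has $2$-norm bounded by $\|W_n\|_2 \cdot e^{2\|f\|_\infty + 2\delta} = O(1/n)$, and so can be made $O(\delta)$ by taking $N$ large. The main obstacle is the cascade of parameter choices: $\delta$ must be fixed from $\epsilon$ first, then $M_0$ is determined from $\delta$ via Lemma \ref{lem:polyCT}, $K$ from $\delta$ and $\|f\|_\infty$, and only then may $N$ be chosen to kill the $O(1/n)$ term. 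Since $M$ is free in the theorem statement, letting it grow with $M_0 K(K+1)$ causes no difficulty.
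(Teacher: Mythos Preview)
Your proposal is correct and follows the same strategy as the paper: telescope through a trigonometric polynomial approximant, truncate the exponential power series at a cutoff $K$, and split each $c_n[p_{M_0}]^k - A_n[p_{M_0}]^k$ via the telescoping identity together with $c_n[p_{M_0}] - A_n[p_{M_0}] = U_n - W_n$. The one minor difference is the rank bound on $R_n[f]$: you use rank subadditivity across all summands to get $M_0K(K+1)$, whereas the paper observes that every product $c_n[p_{M_0}]^\alpha U_n A_n[p_{M_0}]^\beta$ is supported in four corner blocks of size $(\alpha+1)M_0 \times (\beta+1)M_0$, so the entire sum has at most $2KM_0$ nonzero rows and hence $\text{rank}~R_n[f] \leq 2KM_0$---sharper, but immaterial here since $M$ is free.
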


\begin{proof}
Since $f \in \mathcal{C}_{2\pi}$, by Weierstrass theorem \cite[Theorem~6.1]{MR604014}, for any $\epsilon>0$ there exists $M\in \mathbb{N}$ and a trigonometric polynomial \[ p_M(\theta) = \sum_{k=-M}^{k=M}\rho_k e^{\mathbf{i}k\theta}  \] such that \begin{equation} 
\label{eqn:Weierstrass}
 \|f-p_M\|_{\infty} \leq \epsilon.
\end{equation}
For all $n>2M$ we decompose \[ e^{c_n[f]}-e^{A_n[f]} = \underbrace{e^{c_n[f]}-e^{c_n[p_M]}}_{G_1} + \underbrace{e^{c_n[p_M]} - e^{A_n[p_M]}}_{B} + \underbrace{e^{A_n[p_M]}-e^{A_n[f]}}_{G_2}, \]
where  $A_n[p_M]\in \mathbb{C}^{n \times n}$ is the Toeplitz matrix generated by $p_M$ and $c_n[p_M]\in \mathbb{C}^{n \times n}$ is the optimal circulant preconditioner for $A_n[p_M]$.

We first want to show that $G_1 + G_2$ is of small norm. Using Lemmas \ref{lem:TCBoound}, \ref{lemma:expmbound} and (\ref{eqn:Weierstrass}) we have
\begin{eqnarray}\nonumber
\|G_1 + G_2\|_2&\leq&\|e^{c_n[f]}-e^{c_n[p_M]}\|_2 + \|e^{A_n[f]}-e^{A_n[p_M]}\|_2\\\nonumber
&\leq&\|c_n[f]-c_n[p_M]\|_2 e^{\max{(\|c_n[f]\|_2,\|c_n[p_M]\|_2})} \\\nonumber
&&~+ \|A_n[f]-A_n[p_M]\|_2 e^{\max{(\|A_n[f]\|_2,\|A_n[p_M]\|_2})}\\\nonumber
&\leq&( \|c_n[f]-c_n[p_M]\|_2  + \|A_n[f]-A_n[p_M]\|_2 )e^{2\max{(\|f\|_{\infty},\|p_M\|_{\infty}})} \\\nonumber
&\leq&( 2\|f-p_M]\|_{\infty}  + 2\|f-p_M]\|_{\infty} )e^{2\max{(\|f\|_{\infty},\|p_M\|_{\infty}})}\\
&\leq& (4 e^{2\max{(\|f\|_{\infty},\|p_M\|_{\infty}})}) \epsilon.
\label{eqn:G1G2}
\end{eqnarray}

We further rewrite 
\[ B = \underbrace{e^{c_n[p_M]} - \sum_{i=0}^{K}\frac{1}{i!}c_n[p_M]^i}_{B_1} + \underbrace{\sum_{i=0}^{K}\frac{1}{i!}c_n[p_M]^i - \sum_{i=0}^{K}\frac{1}{i!}A_n[p_M]^i}_{D} + \underbrace{\sum_{i=0}^{K}\frac{1}{i!}A_n[p_M]^i- e^{A_n[p_M]}}_{B_2}, \] where $K$ is a positive integer.

We are now to measure the norm of $B_1 + B_2$. Using Lemmas \ref{lem:TCBoound}, \ref{lem:suzuki} and (\ref{eqn:Weierstrass}), we have
\begin{eqnarray}\nonumber
\|B_1 + B_2\|_2&\leq&\|e^{c_n[p_M]} - \sum_{i=0}^{K}\frac{1}{i!}c_n[p_M]^i\|_2 + \|\sum_{i=0}^{K}\frac{1}{i!}A_n[p_M]^i- e^{A_n[p_M]}\|_2\\\nonumber
&\leq&\frac{\|c_n[p_M]\|_2^{K+1}}{(K+1)!}e^{\|c_n[p_M]\|_2} + \frac{\|A_n[p_M]\|_2^{K+1}}{(K+1)!}e^{\|A_n[p_M]\|_2}\\\nonumber
&\leq&(\frac{\|c_n[p_M]\|_2^{K+1}}{(K+1)!}+ \frac{\|A_n[p_M]\|_2^{K+1}}{(K+1)!})e^{2\|p_M\|_{\infty}}\\\nonumber
&\leq&\frac{(2\|p_M\|_{\infty})^{K+1}}{(K+1)!}2e^{2\|p_M\|_{\infty}}=:\epsilon_{K}
\end{eqnarray}
which converges to zero as $K$ goes to infinity. Therefore for a given $\epsilon_{K}>0$, there exists an integer $K$ such that
\begin{equation}
 \|B_1 + B_2\|_2\leq\epsilon_{K} \leq \epsilon. \label{eqn:B1B2}
 \end{equation}
We next show that $D$ can be decomposed into a sum of a matrix of low rank and a matrix of small norm. Firstly, we observe that \[  c_n[p_M] - A_n[p_M] = U_n - W_n,  \] where 
 \[ U_{n}=\begin{bmatrix}{}
& & & &\frac{n-M}{n}\rho_{M}&\cdots&\frac{n-1}{n}\rho_{1}\\
& & & & &\ddots&\vdots\\
& & & & &  &\frac{n-M}{n}\rho_{M}  \\
 & &  &  &   &  &  \\
\frac{n-M}{n}\rho_{-M} &  & &  &  & \\
\vdots & \ddots & &&  &   &  \\
\frac{n-1}{n}\rho_{-1} & \cdots& \frac{n-M}{n}\rho_{-M}  &&  &  & 
\end{bmatrix} \]
and
\[ W_{n}=\begin{bmatrix}{}
0&\frac{1}{n}\rho_{-1}&\cdots&\frac{M}{n}\rho_{-M}&&&&\\
\frac{1}{n}\rho_{1}&\ddots&\ddots&\ddots&\ddots&&&\\
\vdots&\ddots&\ddots&\ddots&\ddots&\ddots&&\\
\frac{M}{n}\rho_{M}&\ddots&\ddots&\ddots&\ddots&\ddots&\ddots&\\
&\ddots&\ddots&\ddots&\ddots&\ddots&\ddots&\frac{M}{n}\rho_{-M}\\
&&\ddots&\ddots&\ddots&\ddots&\ddots&\vdots\\
&&&\ddots&\ddots&\ddots&\ddots&\frac{1}{n}\rho_{-1}\\
&&&&\frac{M}{n}\rho_{M}&\cdots&\frac{1}{n}\rho_{1}   &0
\end{bmatrix}. 
\]

Rewrite $D$ as
\begin{eqnarray}\nonumber
D &=& \sum_{i=0}^{K}\frac{1}{i!}c_n[p_M]^i - \sum_{i=0}^{K}\frac{1}{i!}A_n[p_M]^i\\\nonumber
&=&\sum_{i=1}^{K}\frac{1}{i!}(c_n[p_M]^i - A_n[p_M]^i)\\\nonumber
&=&\sum_{i=1}^{K}\frac{1}{i!}( \sum_{j=0}^{i-1}c_n[p_M]^{j}(c_n[p_M]-A_n[p_M])A_n[p_M]^{i-1-j} )\\\nonumber
&=&\sum_{i=1}^{K}\frac{1}{i!}( \sum_{j=0}^{i-1}c_n[p_M]^{j}(U_n - W_n)A_n[p_M]^{i-1-j} )\\\nonumber
&=&\underbrace{ \sum_{i=1}^{K}\frac{1}{i!}( \sum_{j=0}^{i-1}c_n[p_M]^{j}U_nA_n[p_M]^{i-1-j} )}_{R_n[f]} + \underbrace{\sum_{i=1}^{K}\frac{1}{i!}( \sum_{j=0}^{i-1}c_n[p_M]^{j}W_nA_n[p_M]^{i-1-j} )}_{J}.\\\nonumber
 \end{eqnarray}

Using Lemmas \ref{lem:TCBoound} and \ref{lem:polyCT}, we can estimate the norm of $J$:
\begin{eqnarray}\nonumber
\|J\|_2 &=& \|\sum_{i=1}^{K}\frac{1}{i!} \sum_{j=0}^{i-1}c_n[p_M]^{j}W_nA_n[p_M]^{i-1-j} \|_2\\\nonumber
&\leq &\sum_{i=1}^{K}\frac{1}{i!}\| \sum_{j=0}^{i-1}c_n[p_M]^{j}W_nA_n[p_M]^{i-1-j} \|_2\\\nonumber
&\leq &\|W_n\|_2\sum_{i=1}^{K}\frac{1}{i!} \sum_{j=0}^{i-1}\|c_n[p_M]\|_2^{j}\|A_n[p_M]\|_2^{i-1-j}\\\nonumber
&\leq &\|W_n\|_2\sum_{i=1}^{K}\frac{1}{i!} \sum_{j=0}^{i-1}(2\|p_M\|_{\infty})^{j}(2\|p_M\|_{\infty})^{i-1-j}\\\nonumber
&= &\|W_n\|_2\sum_{i=1}^{K}\frac{1}{i!} \sum_{j=0}^{i-1}(2\|p_M\|_{\infty})^{i-1}\\\nonumber
&= &\|W_n\|_2\sum_{i=1}^{K}\frac{1}{(i-1)!} (2\|p_M\|_{\infty})^{i-1}\\\nonumber
&\leq &\|W_n\|_2\sum_{i=1}^{\infty}\frac{1}{(i-1)!} (2\|p_M\|_{\infty})^{i-1}\\\nonumber
&= &\|W_n\|_2 e^{2\|p_M\|_{\infty}}\\\label{eqn:Jnorm}
&\leq &\frac{1}{n}M(M+1)(\epsilon +\|f\|_{\infty}) e^{2\|p_M\|_{\infty}}.\\\nonumber
\end{eqnarray}

We now show that rank $R_n[f] \leq 2KM$ by first investigating the structure of $R_n[f]$. Similar to the approach used in the proof of Lemma 3.11 in \cite{doi:10.1137/080720280}, simple computations show that
 \[ 
 c_n[p_M]^{\alpha}U_nA_n[p_M]^{\beta}=\begin{bmatrix}{}
\Diamond & \cdots & \Diamond & & & \Diamond & \cdots& \Diamond \\
\vdots  & \Diamond  & \vdots  &  &&\vdots  & \Diamond& \vdots \\
\Diamond & \cdots  & \Diamond& & & \Diamond & \cdots& \Diamond \\
  &   & &  &  && &  \\
  &   & &  &  && &  \\
\Diamond & \cdots & \Diamond & & & \Diamond & \cdots& \Diamond \\
\vdots  & \Diamond  & \vdots  & & &\vdots  &\Diamond & \vdots\\
\Diamond & \cdots  & \Diamond& & & \Diamond & \cdots& \Diamond 
\end{bmatrix},
  \]
where the diamonds represent the non-zero entries which appear only in the four $(\alpha+1)M$ by $(\beta+1)M$ blocks in the corners, provided that $n$ is larger than $2\max(\alpha+1, \beta+1)M$. Since the rank of \[R_n[f]=\sum_{i=1}^{K}\frac{1}{i!}( \sum_{j=0}^{i-1}c_n[p_M]^{j}U_nA_n[p_M]^{i-1-j} )\] is determined by that of $\sum_{j=0}^{K-1}c_n[p_M]^{j}U_nA_n[p_M]^{K-1-j}$ which is a block matrix with only four non-zero $KM$ by $KM$ blocks in its corners, it follows that the rank of $R_n[f]$ is less than or equal to $2KM$ if we assume $n>2KM$.

Considering (\ref{eqn:Jnorm}), we pick \[N := \max{\{M(M+1)(1+\frac{\|f\|_{\infty}}{\epsilon})e^{2\|p_M\|_{\infty}}, 2KM  \}}, \] and it follows that for all $n>N$ we have $\|J\|_2\leq \epsilon$. Further combining this result with (\ref{eqn:G1G2}) and (\ref{eqn:B1B2}), we conclude that for all $n>N$ \[ \|E_n[f]\|_2=\|G_1+B_1+J+B_2+G_2\|_2 \leq (4 e^{2\max{(\|f\|_{\infty},\|p_M\|_{\infty}})}+2) \epsilon. \]\qed
\end{proof}

\begin{corollary}
\label{coro:maininvexpmCT}
Let $f \in \mathcal{C}_{2\pi}$. Let $A_n[f]\in \mathbb{C}^{n \times n}$ be the Toeplitz matrix generated by $f$ and $c_n[f]\in \mathbb{C}^{n \times n}$ be the optimal circulant preconditioner for $A_n[f]$. For all $\epsilon > 0$, there exist positive integers $N$ and $M$ such that for all $n > N$ \[ (e^{c_n[f]})^{-1}e^{A_n[f]}=I_n + \widehat{R}_n[f] + \widehat{E}_n[f],\] where \[ \text{rank}~\widehat{R}_n[f]\leq 2M, \] \[ \|\widehat{E}_n[f]\|_2 \leq \epsilon.\] 

\end{corollary}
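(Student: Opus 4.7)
The plan is to leverage Theorem \ref{thm:aminCTdecomp_chap5} by left-multiplying the decomposition it provides with $(e^{c_n[f]})^{-1}$, and to absorb the resulting multiplicative constant into $\epsilon$ at the outset. Fix $\epsilon>0$, and apply Theorem \ref{thm:aminCTdecomp_chap5} not to $\epsilon$ directly but to $\epsilon' := \epsilon \cdot e^{-2\|f\|_{\infty}}$. This yields positive integers $N$ and $M$ such that for all $n>N$,
\[ e^{c_n[f]} - e^{A_n[f]} = R_n[f] + E_n[f], \]
with $\text{rank}\,R_n[f] \leq 2M$ and $\|E_n[f]\|_2 \leq \epsilon'$.

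Next I would rearrange this as $e^{A_n[f]} = e^{c_n[f]} - R_n[f] - E_n[f]$ and multiply from the left by $(e^{c_n[f]})^{-1}$, which exists by Lemma \ref{lemma:exp_plus_product} (its inverse being $e^{-c_n[f]}$). This gives
\[ (e^{c_n[f]})^{-1} e^{A_n[f]} = I_n - (e^{c_n[f]})^{-1} R_n[f] - (e^{c_n[f]})^{-1} E_n[f], \]
so the natural choice is $\widehat{R}_n[f] := -(e^{c_n[f]})^{-1} R_n[f]$ and $\widehat{E}_n[f] := -(e^{c_n[f]})^{-1} E_n[f]$. Since the rank of a product is bounded by the rank of either factor, $\text{rank}\,\widehat{R}_n[f] \leq \text{rank}\,R_n[f] \leq 2M$.

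For the error bound I would invoke Corollary \ref{coro:invexpmCBoound}, which provides $\|(e^{c_n[f]})^{-1}\|_2 \leq e^{2\|f\|_{\infty}}$ \emph{uniformly in} $n$. Submultiplicativity of the spectral norm then yields
\[ \|\widehat{E}_n[f]\|_2 \leq \|(e^{c_n[f]})^{-1}\|_2 \cdot \|E_n[f]\|_2 \leq e^{2\|f\|_{\infty}} \cdot \epsilon' = \epsilon, \]
completing the decomposition. No step is expected to be a serious obstacle: the whole argument is a short algebraic manipulation followed by two applications of earlier results. The only mild care needed is to rescale $\epsilon$ at the beginning so that the $n$-independent constant $e^{2\|f\|_{\infty}}$ coming from Corollary \ref{coro:invexpmCBoound} does not inflate the final bound; it is precisely the uniformity in $n$ of that corollary that makes this rescaling harmless.
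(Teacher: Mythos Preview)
Your proposal is correct and follows essentially the same approach as the paper: invoke Theorem~\ref{thm:aminCTdecomp_chap5}, left-multiply by $(e^{c_n[f]})^{-1}$, and use Corollary~\ref{coro:invexpmCBoound} for the uniform bound on the inverse. Your pre-scaling of $\epsilon$ by $e^{-2\|f\|_\infty}$ is a nice touch that makes the final bound come out exactly as $\epsilon$; the paper simply notes the uniform boundedness and leaves this absorption implicit.
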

\begin{proof}

By Theorem \ref{thm:aminCTdecomp_chap5}, we know that for all $\epsilon > 0$, there exist positive integers $N$ and $M$ such that for all $n > N$ \[ e^{c_n[f]} - e^{A_n[f]} = R_n[f] + E_n[f],\] where \[ \text{rank}~ R_n[f]\leq 2M, \] \[ \|E_n[f]\|_2 \leq \epsilon.\]

By Corollary \ref{coro:invexpmCBoound} we know that $\|(e^{c_n[f]})^{-1}\|_2$ is uniformly bounded with respect to $n$, so that we have 
\begin{eqnarray}\nonumber
(e^{c_n[f]})^{-1}e^{A_n[f]} &=&I_n+ (e^{c_n[f]})^{-1}(e^{A_n[f]}-e^{c_n[f]}) \\\nonumber
&=&I_n+ \underbrace{(e^{c_n[f]})^{-1}(-R_n[f])}_{\widehat{R_n}[f]}+\underbrace{(e^{c_n[f]})^{-1}(-E_n[f])}_{\widehat{E_n}[f]}.
\end{eqnarray}The result follows.
\qed
\end{proof}

\begin{remark}
Since $A_n[f]$ is Hermitian when $f$ is real-valued, we can write $A_n[f]=Z_n^{T}D_nZ_n$ where $D_n$ is a diagonal matrix with real eigenvalues $d_i$ being the eigenvalues of $A_n[f]$. We immediately see that  $e^{A_n[f]}=Z_n^{T} e^{D_n}Z_n$ is positive definite as its eigenvalues are all of the form $e^{d_i}>0$. Thus CG can be used in this case. \end{remark}

Next, for the more general case when $e^{A_n[f]}$ is non-Hermitian, we can use CG for the normal equations system with the preconditioner  $e^{c_n[f]}$.
\begin{corollary}
\label{coro:maininvexpmNORMALMETRIC}
Let $f \in \mathcal{C}_{2\pi}$. Let $A_n[f]\in \mathbb{C}^{n \times n}$ be the Toeplitz matrix generated by $f$ and $c_n[f]\in \mathbb{C}^{n \times n}$ be the optimal circulant preconditioner for $A_n[f]$. For all $\epsilon > 0$, there exist positive integers $N$ and $M$ such that for all $n > N$ \[ [(e^{c_n[f]})^{-1} e^{A_n[f]}]^{*}[(e^{c_n[f]})^{-1} e^{A_n[f]}]=I_n+ \overline{R}_n[f]+\overline{E}_n[f],\] where \[ \text{rank}~\overline{R}_n[f]\leq 4M, \] \[ \|\overline{E}_n[f]\|_2 \leq \epsilon.\]
\end{corollary}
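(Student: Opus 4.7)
The plan is to build directly on Corollary~\ref{coro:maininvexpmCT}, which already supplies $(e^{c_n[f]})^{-1}e^{A_n[f]} = I_n + \widehat{R}_n[f] + \widehat{E}_n[f]$ with $\mathrm{rank}\,\widehat{R}_n[f] \leq 2M$ and $\|\widehat{E}_n[f]\|_2$ as small as we like. Setting $Q_n := (e^{c_n[f]})^{-1}e^{A_n[f]}$ and expanding
\[
Q_n^{*}Q_n = I_n + \widehat{R}_n[f] + \widehat{R}_n[f]^{*} + \widehat{R}_n[f]^{*}\widehat{R}_n[f] + \widehat{E}_n[f] + \widehat{E}_n[f]^{*} + \widehat{R}_n[f]^{*}\widehat{E}_n[f] + \widehat{E}_n[f]^{*}\widehat{R}_n[f] + \widehat{E}_n[f]^{*}\widehat{E}_n[f],
\]
I would set $\overline{R}_n[f] := \widehat{R}_n[f] + \widehat{R}_n[f]^{*} + \widehat{R}_n[f]^{*}\widehat{R}_n[f]$ and let $\overline{E}_n[f]$ absorb the five remaining terms, each of which contains at least one factor of $\widehat{E}_n[f]$.

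For the rank bound, the main observation is that $\mathrm{col}(\widehat{R}_n[f]^{*}\widehat{R}_n[f]) \subseteq \mathrm{col}(\widehat{R}_n[f]^{*})$, so that
\[
\mathrm{col}(\overline{R}_n[f]) \subseteq \mathrm{col}(\widehat{R}_n[f]) + \mathrm{col}(\widehat{R}_n[f]^{*}),
\]
whose dimension is at most $2M + 2M = 4M$. This column-space containment is the one step that deserves care; a naive subadditivity-of-rank estimate on the three summands of $\overline{R}_n[f]$ would only give $6M$, and so this small observation is where the factor $4M$ in the statement is actually pinned down.

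For the norm bound, I would first establish an $n$-independent bound $\|\widehat{R}_n[f]\|_2 \leq C(f)$ depending only on $\|f\|_\infty$. This follows by writing $\widehat{R}_n[f] = -(e^{c_n[f]})^{-1} R_n[f]$ and combining Corollary~\ref{coro:invexpmCBoound} with Lemma~\ref{lem:TCBoound} and Lemma~\ref{lem:expmAbd}, together with the trivial bound $\|R_n[f]\|_2 \leq \|e^{c_n[f]}\|_2 + \|e^{A_n[f]}\|_2 + \|E_n[f]\|_2$ coming from Theorem~\ref{thm:aminCTdecomp_chap5}. If the error level in Corollary~\ref{coro:maininvexpmCT} is replaced by some smaller $\epsilon' > 0$, the triangle inequality then gives
\[
\|\overline{E}_n[f]\|_2 \leq \bigl(2 + 2C(f)\bigr)\epsilon' + (\epsilon')^2,
\]
and choosing $\epsilon'$ sufficiently small at the outset makes the right-hand side smaller than any prescribed $\epsilon$. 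The positive integers $N$ and $M$ can then be read off from Corollary~\ref{coro:maininvexpmCT} applied at this rescaled tolerance. The norm part is routine bookkeeping; the only substantive step in the whole argument is the column-space containment that keeps the rank at $4M$ rather than $6M$.
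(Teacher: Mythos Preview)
Your argument is correct, and the overall strategy---expand $Q_n^*Q_n$ with $Q_n = I_n + \widehat{R}_n[f] + \widehat{E}_n[f]$ from Corollary~\ref{coro:maininvexpmCT} and split the resulting terms into a low-rank piece and a small-norm piece---is exactly what the paper does. The difference lies only in how the nine expansion terms are grouped.

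The paper places the two mixed terms $\widehat{R}_n[f]^{*}\widehat{E}_n[f]$ and $\widehat{E}_n[f]^{*}\widehat{R}_n[f]$ into the \emph{low-rank} part rather than the small-norm part, writing
\[
\overline{R}_n[f] = \widehat{R}_n[f]^{*}\bigl(I_n+\widehat{R}_n[f]+\widehat{E}_n[f]\bigr) + \bigl(I_n+\widehat{E}_n[f]^{*}\bigr)\widehat{R}_n[f],
\qquad
\overline{E}_n[f] = \widehat{E}_n[f]+\widehat{E}_n[f]^{*}+\widehat{E}_n[f]^{*}\widehat{E}_n[f].
\]
With this grouping the rank bound $4M$ is immediate from plain subadditivity: the first summand of $\overline{R}_n[f]$ has rank at most $\mathrm{rank}\,\widehat{R}_n[f]^{*} \le 2M$ and the second at most $\mathrm{rank}\,\widehat{R}_n[f] \le 2M$, so no column-space containment argument is needed. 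Likewise the norm bound becomes simply $\|\overline{E}_n[f]\|_2 \le 2\epsilon + \epsilon^2$, with no need for a uniform estimate on $\|\widehat{R}_n[f]\|_2$. Your column-space observation is correct and pleasant, but the paper's regrouping sidesteps both the rank subtlety and the extra norm bookkeeping in one move.
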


\begin{proof}
By Corollary \ref{coro:maininvexpmCT}, we know that for all $\epsilon > 0$ there exist positive integers $N$ and $M$ such that for all $n > N$ \[ (e^{c_n[f]})^{-1}e^{A_n[f]}=I_n + \widehat{R}_n[f] + \widehat{E}_n[f],\] where \[ \text{rank}~\widehat{R}_n[f]\leq 2M, \] \[ \|\widehat{E}_n[f]\|_2 \leq \epsilon.\]

We then have
\begin{eqnarray}\nonumber
&&[(e^{c_n[f]})^{-1} e^{A_n[f]}]^{*}[(e^{c_n[f]})^{-1} e^{A_n[f]}]\\\nonumber
 &=& (I_n + \widehat{R_n}[f] + \widehat{E_n}[f])^{*} (I_n + \widehat{R}_n[f] + \widehat{E}_n[f])\\\nonumber
&=& I_n + \underbrace{\widehat{R}_n[f]^{*}(I_n+\widehat{R}_n[f]+\widehat{E}_n[f]) + (I_n+\widehat{E}_n[f]^{*})\widehat{R}_n[f]}_{\overline{R}_n[f]} \\\nonumber
&& +~\underbrace{\widehat{E}_n[f]+\widehat{E}_n[f]^{*}+\widehat{E}_n[f]^{*}\widehat{E}_n[f]}_{\overline{E}_n[f]}.
\end{eqnarray}

It immediately follows that rank $\overline{R}_n[f]\leq 4M$ and $\|\overline{E}_n[f]\|_2\leq \epsilon^2 + 2\epsilon$. 
\qed
\end{proof}

Since $[(e^{c_n[f]})^{-1} e^{A_n[f]}]^{*}[(e^{c_n[f]})^{-1} e^{A_n[f]}]$ in Corollary \ref{coro:maininvexpmNORMALMETRIC} is Hermitian, by Weyl's theorem we know that its eigenvalues are mostly close to $1$ when $n$ is sufficiently large.

\subsection{Spectra of $(\sin{c_n[f]})^{-1}\sin{A_n[f]} $ and $(\cos{c_n[f]})^{-1}\cos{A_n[f]} $}

In this subsection we directly show that similar results hold for $(\sin{c_n[f]})^{-1}\sin{A_n[f]} $ and $(\cos{c_n[f]})^{-1}\cos{A_n[f]} $ using the theorems on $(e^{c_n[f]})^{-1}e^{A_n[f]}$.

\begin{theorem}
\label{thm:aminSINETdecomp}
Let $f \in \mathcal{C}_{2\pi}$. Let $A_n[f]\in \mathbb{C}^{n \times n}$ be the Toeplitz matrix generated by $f$ and $c_n[f]\in \mathbb{C}^{n \times n}$ be the optimal circulant preconditioner for $A_n[f]$. For all $\epsilon > 0$, there exist positive integers $N$ and $M$ such that for all $n > N$ \[ \sin{{c_n[f]}} - \sin{A_n[f]} = \mathcal{R}_n[f] + \mathcal{E}_n[f],\] where \[ \text{rank } \mathcal{R}_n[f]\leq 2M, \] \[ \|\mathcal{E}_n[f]\|_2 \leq \epsilon.\]
\end{theorem}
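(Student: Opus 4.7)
The plan is to reduce the sine case to the exponential case of Theorem \ref{thm:aminCTdecomp_chap5} via Euler's formula, rather than repeat the power-series argument from scratch. Recall that $\sin X = \tfrac{1}{2\mathbf{i}}(e^{\mathbf{i}X}-e^{-\mathbf{i}X})$ for any square matrix $X$, so
\[
\sin c_n[f] - \sin A_n[f]
= \tfrac{1}{2\mathbf{i}}\bigl[(e^{\mathbf{i}\,c_n[f]} - e^{\mathbf{i}\,A_n[f]}) - (e^{-\mathbf{i}\,c_n[f]} - e^{-\mathbf{i}\,A_n[f]})\bigr].
\]
Because Fourier coefficients are linear in the generating function, we have $c_n[\mathbf{i}f] = \mathbf{i}\,c_n[f]$, $A_n[\mathbf{i}f] = \mathbf{i}\,A_n[f]$, and analogously for $-\mathbf{i}f$. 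Moreover $\mathbf{i}f$ and $-\mathbf{i}f$ belong to $\mathcal{C}_{2\pi}$ with the same supremum norm as $f$, so Theorem \ref{thm:aminCTdecomp_chap5} is directly applicable to each of them.

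The second step is to apply Theorem \ref{thm:aminCTdecomp_chap5} to the generating functions $\mathbf{i}f$ and $-\mathbf{i}f$. For any $\epsilon>0$ this yields positive integers $N_{\pm}$ and $M_{\pm}$ and decompositions
\[
e^{c_n[\pm\mathbf{i}f]} - e^{A_n[\pm\mathbf{i}f]} = R_n^{\pm} + E_n^{\pm},
\qquad \operatorname{rank} R_n^{\pm}\leq 2M_{\pm}, \qquad \|E_n^{\pm}\|_2\leq \epsilon,
\]
valid for all $n>N_{\pm}$. Taking $N := \max(N_+, N_-)$ and substituting these into the Euler identity gives
\[
\sin c_n[f] - \sin A_n[f]
= \underbrace{\tfrac{1}{2\mathbf{i}}(R_n^{+}- R_n^{-})}_{=: \mathcal{R}_n[f]}
+ \underbrace{\tfrac{1}{2\mathbf{i}}(E_n^{+}- E_n^{-})}_{=: \mathcal{E}_n[f]}.
\]

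Finally, by subadditivity of rank, $\operatorname{rank}\mathcal{R}_n[f] \leq 2M_+ + 2M_-$, and by the triangle inequality, $\|\mathcal{E}_n[f]\|_2 \leq \epsilon$. Relabeling the resulting constant $M_++M_-$ as $M$ (exactly as is implicitly done in the statement of Theorem \ref{thm:aminCTdecomp_chap5}, where the effective rank bound from the proof is $2KM$ but is absorbed into a single symbol $M$ in the final statement) yields the claimed bounds. I do not expect any genuine obstacle: the only technical points are (i) checking that $\pm\mathbf{i}f\in\mathcal{C}_{2\pi}$ so that Theorem \ref{thm:aminCTdecomp_chap5} applies, and (ii) bookkeeping the ranks and norms across the two invocations. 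The analogous Euler identity $\cos X = \tfrac{1}{2}(e^{\mathbf{i}X}+e^{-\mathbf{i}X})$ would give the parallel statement for cosine by an identical argument, which is presumably why the author combines the two subcases in the same subsection.
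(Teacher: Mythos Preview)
Your proposal is correct and follows essentially the same route as the paper: reduce to the exponential case via Euler's formula, identify $\mathbf{i}\,c_n[f]=c_n[\mathbf{i}f]$ and $\mathbf{i}\,A_n[f]=A_n[\mathbf{i}f]$, and invoke Theorem~\ref{thm:aminCTdecomp_chap5} for the generating functions $\pm\mathbf{i}f$. The only minor difference is in the rank bookkeeping: the paper notes that $R_n[\mathbf{i}f]$ and $R_n[-\mathbf{i}f]$ are supported on the \emph{same} four corner blocks (since the same trigonometric approximation degree $M$ works for $f$, $\mathbf{i}f$, and $-\mathbf{i}f$), so their difference already has rank $\le 2M$ without relabeling, whereas you use rank subadditivity and absorb the extra factor into $M$.
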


\begin{proof}
By Theorem \ref{thm:aminCTdecomp_chap5}, we know that for all $\epsilon > 0$ there exist positive integers $N$ and $M$ such that for all $n > N$ \[ e^{c_n[f]} - e^{A_n[f]} = R_n[f] + E_n[f],\] where \[ \text{rank}~ R_n[f]\leq 2M, \] \[ \|E_n[f]\|_2 \leq \epsilon.\]

Using the fact that $\sin{A_n}=\frac{e^{\mathbf{i}A_n}-e^{-\mathbf{i}A_n}}{2\mathbf{i}}$ for any $A_n \in \mathbb{C}^{n\times n}$, we write
\begin{eqnarray}\nonumber
\sin{{c_n[f]}} - \sin{A_n[f]} &=& (\frac{e^{\mathbf{i}c_n[f]}-e^{-\mathbf{i}c_n[f]}}{2\mathbf{i}}) -  (\frac{e^{\mathbf{i}A_n[f]}-e^{-\mathbf{i}A_n[f]}}{2\mathbf{i}}) \\\nonumber
&=& (\frac{e^{\mathbf{i}c_n[f]}-e^{\mathbf{i}A_n[f]}}{2\mathbf{i}}) -  (\frac{e^{-\mathbf{i}c_n[f]}-e^{-\mathbf{i}A_n[f]}}{2\mathbf{i}}) \\\nonumber
&=& (\frac{e^{c_n[\mathbf{i}f]}-e^{A_n[\mathbf{i}f]}}{2\mathbf{i}}) -  (\frac{e^{c_n[-\mathbf{i}f]}-e^{A_n[-\mathbf{i}f]}}{2\mathbf{i}}) \\\nonumber
&=& (\frac{R_n[\mathbf{i}f] + E_n[\mathbf{i}f]}{2\mathbf{i}}) -  (\frac{R_n[-\mathbf{i}f] + E_n[-\mathbf{i}f]}{2\mathbf{i}}) \\\nonumber
&=& \underbrace{(\frac{R_n[\mathbf{i}f] - R_n[-\mathbf{i}f]}{2\mathbf{i}})}_{\mathcal{R}_n[f]} +  \underbrace{(\frac{E_n[\mathbf{i}f] - E_n[-\mathbf{i}f]}{2\mathbf{i}})}_{\mathcal{E}_n[f]}. \\\nonumber
\end{eqnarray}

Since $R_n[\mathbf{i}f]$ and $R_n[-\mathbf{i}f]$ are both block matrices with only four non-zero $M$ by $M$ blocks in their corners, we see that the rank of $\mathcal{R}_n[f]$ is less than or equal to $2M$. Also
\begin{eqnarray}\nonumber
\|\mathcal{E}_n[f]\|_2 &=& \|\frac{E_n[\mathbf{i} f] - E_n[-\mathbf{i} f]}{2\mathbf{i}}\|_2\\\nonumber
&\leq& \frac{\|E_n[\mathbf{i}f]\|_2   +  \|E_n[-\mathbf{i}f]\|_2}{2}\\\nonumber
&\leq& \epsilon.
\end{eqnarray}
\qed
\end{proof}

\begin{corollary}
\label{coro:aminSINETprecon}
Let $f \in \mathcal{C}_{2\pi}$. Let $A_n[f]\in \mathbb{C}^{n \times n}$ be the Toeplitz matrix generated by $f$ and $c_n[f]\in \mathbb{C}^{n \times n}$ be the optimal circulant preconditioner for $A_n[f]$. If $\| (\sin{c_n[f]})^{-1}  \|_2$ is uniformly bounded with respect to $n$, then for all $\epsilon > 0$ there exist positive integers $N$ and $M$ such that for all $n > N$ \[ (\sin{c_n[f]})^{-1}(\sin{A_n[f]})=I_n + \widehat{\mathcal{R}}_n[f] + \widehat{\mathcal{E}}_n[f],\] where \[ \text{rank } \widehat{\mathcal{R}}_n[f]\leq 2M, \] \[ \|\widehat{\mathcal{E}}_n[f]\|_2 \leq \epsilon.\]
\end{corollary}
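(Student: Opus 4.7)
The plan is to mirror, almost verbatim, the argument used in Corollary \ref{coro:maininvexpmCT} for the exponential case, with Theorem \ref{thm:aminSINETdecomp} playing the role that Theorem \ref{thm:aminCTdecomp_chap5} played there. The hypothesis that $\|(\sin c_n[f])^{-1}\|_2$ is uniformly bounded in $n$ plays the role that Corollary \ref{coro:invexpmCBoound} played in that earlier proof: it is precisely what is needed to pass from an additive decomposition of $\sin c_n[f] - \sin A_n[f]$ to a multiplicative/preconditioned decomposition of $(\sin c_n[f])^{-1}\sin A_n[f]$.

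More concretely, first I would record the trivial algebraic identity
\[
(\sin c_n[f])^{-1}\sin A_n[f] = I_n + (\sin c_n[f])^{-1}\bigl(\sin A_n[f] - \sin c_n[f]\bigr),
\]
which is legitimate because the uniform bound hypothesis in particular forces $\sin c_n[f]$ to be invertible. Let $C$ denote the uniform upper bound on $\|(\sin c_n[f])^{-1}\|_2$. Given $\epsilon>0$, I would then apply Theorem \ref{thm:aminSINETdecomp} with the slack parameter $\epsilon/C$ in place of $\epsilon$, producing positive integers $N,M$ such that for all $n>N$ one has $\sin c_n[f] - \sin A_n[f] = \mathcal{R}_n[f] + \mathcal{E}_n[f]$ with $\text{rank }\mathcal{R}_n[f]\le 2M$ and $\|\mathcal{E}_n[f]\|_2 \le \epsilon/C$.

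Substituting this into the identity above and setting
\[
\widehat{\mathcal{R}}_n[f] := -(\sin c_n[f])^{-1}\mathcal{R}_n[f], \qquad \widehat{\mathcal{E}}_n[f] := -(\sin c_n[f])^{-1}\mathcal{E}_n[f],
\]
gives the decomposition in the statement. Left multiplication by the invertible matrix $(\sin c_n[f])^{-1}$ preserves rank, so $\text{rank }\widehat{\mathcal{R}}_n[f]\le 2M$. By submultiplicativity of the spectral norm,
\[
\|\widehat{\mathcal{E}}_n[f]\|_2 \le \|(\sin c_n[f])^{-1}\|_2 \, \|\mathcal{E}_n[f]\|_2 \le C\cdot \frac{\epsilon}{C} = \epsilon,
\]
which completes the argument.

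There is essentially no real obstacle here: all of the structural work has already been done in Theorem \ref{thm:aminSINETdecomp} (which in turn rests on Theorem \ref{thm:aminCTdecomp_chap5}), and the only nontrivial ingredient missing from that theorem is a uniform control on $\|(\sin c_n[f])^{-1}\|_2$, which is exactly what is being assumed. The only piece of bookkeeping that requires any care is the rescaling of the target accuracy by the constant $C$; without the uniform boundedness hypothesis one could not carry out this rescaling, and a corollary in this clean form would not be available. This is in contrast to the exponential case, where the analogous bound $\|(e^{c_n[f]})^{-1}\|_2 \le e^{2\|f\|_\infty}$ comes for free from Lemma \ref{lemma:exp_plus_product} and Lemma \ref{lem:expmAbd}, whereas for sine no such automatic bound is available and the hypothesis must be stated as an assumption.
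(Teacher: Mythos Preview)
Your proposal is correct and follows essentially the same approach as the paper: invoke Theorem~\ref{thm:aminSINETdecomp}, write the trivial identity $(\sin c_n[f])^{-1}\sin A_n[f] = I_n + (\sin c_n[f])^{-1}(\sin A_n[f]-\sin c_n[f])$, and define $\widehat{\mathcal{R}}_n[f]$, $\widehat{\mathcal{E}}_n[f]$ exactly as you do. Your explicit rescaling by $\epsilon/C$ is in fact slightly more careful than the paper's own presentation, which simply absorbs the uniform bound into the unspecified constant in front of $\epsilon$.
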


\begin{proof}
By Theorem \ref{thm:aminSINETdecomp}, we know that for all $\epsilon > 0$, there exist positive integers $N$ and $M$ such that for all $n > N$ \[ \sin{{c_n[f]}} - \sin{A_n[f]} = \mathcal{R}_n[f] + \mathcal{E}_n[f],\] where \[ \text{rank } \mathcal{R}_n[f]\leq 2M, \] \[ \|\mathcal{E}_n[f]\|_2 \leq \epsilon.\]

By the assumption that $\| (\sin{c_n[f]})^{-1}  \|_2$ is uniformly bounded with respect to $n$, we have 
\begin{eqnarray}
(\sin{c_n[f]})^{-1}\sin{A_n[f]} &=&I_n+ (\sin{c_n[f]})^{-1}\nonumber
(\sin{A_n[f]}-\sin{c_n[f]})\\ \nonumber
 &=& I_n+\underbrace{(\sin{c_n[f]})^{-1}(-\mathcal{R}_n[f])}_{\widehat{\mathcal{R}}_n[f]}+\underbrace{(\sin{c_n[f]})^{-1}(-\mathcal{E}_n[f])}_{\widehat{\mathcal{E}}_n[f]}.
 \end{eqnarray} The result follows. \qed
\end{proof}

\begin{remark}
From \[ \|(\sin{c_n[f]})^{-1}\|_2=\max_i{|\frac{1}{\sin{\lambda_i}} |} \] we know that $\|(\sin{c_n[f]})^{-1}\|_2$ could be arbitrarily large since $\sin{\lambda_i}$ could be close to zero, where $\lambda_i$ is the $i$-th eigenvalue of $c_n[f]$. Therefore, we have needed to assume that $\|(\sin{c_n[f]})^{-1}\|_2$ is uniformly bounded with respect to $n$.
\end{remark}

Consider now the special case when $\sin{A_n[f]}$ is Hermitian. Unlike the case with the matrix exponential, we cannot use CG for $\sin{A_n[f]}$ as it is not positive definite in general. By the diagonalisation of  $\sin{A_n[f]}=Z_n^{T} (\sin{D_n})Z_n$ where $D_n$ is a diagonal matrix with real eigenvalues $d_i$ being the eigenvalues of $A_n[f]$, as before, we see that its eigenvalues are all of the form $-1\leq\sin{d_i}\leq1$. Krylov subspace methods like MINRES \cite{paige1975solution} together with a Hermitian positive definite preconditioner $|\sin{c_n[f]}|$ should be used \cite[Section~5]{ANU:9672992}, where $|\sin{c_n[f]}|$ is the absolute value circulant preconditioner \cite{doi:10.1137/140974213,McDonald2017} of $\sin{c_n[f]}$.

\begin{corollary}
\label{coro:maininvSINSYMMETRIC}
Let $f \in \mathcal{C}_{2\pi}$ be a real-valued function. Let $A_n[f]\in \mathbb{C}^{n \times n}$ be the Toeplitz matrix generated by $f$ and $c_n[f]\in \mathbb{C}^{n \times n}$ be the optimal circulant preconditioner for $A_n[f]$. If $\| (\sin{c_n[f]})^{-1}  \|_2$ is uniformly bounded with respect to $n$, then for all $\epsilon > 0$ there exist positive integers $N$ and $M$ such that for all $n > N$  \[ |\sin{c_n[f]}|^{-1}\sin{A_n[f]}=Q_n + \widetilde{\mathcal{R}}_n[f] + \widetilde{\mathcal{E}}_n[f], \] where $Q_n$ is unitary and Hermitian, \[ \text{rank } \widetilde{\mathcal{R}}_n[f]\leq 2M, \] \[ \|\widetilde{\mathcal{E}}_n[f]\|_2 \leq \epsilon.\] 
\end{corollary}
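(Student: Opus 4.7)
The plan is to follow the template used in Corollaries \ref{coro:maininvexpmCT} and \ref{coro:aminSINETprecon}, adding the one new idea needed to produce the unitary Hermitian part $Q_n$. The starting point is Theorem \ref{thm:aminSINETdecomp}, which gives
\[ \sin c_n[f] - \sin A_n[f] = \mathcal{R}_n[f] + \mathcal{E}_n[f], \]
with $\operatorname{rank}\mathcal{R}_n[f]\le 2M$ and $\|\mathcal{E}_n[f]\|_2\le \epsilon$. I would rewrite
\[ |\sin c_n[f]|^{-1}\sin A_n[f] = |\sin c_n[f]|^{-1}\sin c_n[f] + |\sin c_n[f]|^{-1}\bigl(\sin A_n[f]-\sin c_n[f]\bigr), \]
and identify the first summand as $Q_n$ and the second as $-|\sin c_n[f]|^{-1}\mathcal{R}_n[f] - |\sin c_n[f]|^{-1}\mathcal{E}_n[f]$, which will become $\widetilde{\mathcal{R}}_n[f]$ and $\widetilde{\mathcal{E}}_n[f]$ respectively.

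The key observation is that because $f$ is real-valued, the Fourier coefficients satisfy $a_{-k}=\overline{a_k}$, and a short computation from the definition of $c_k$ shows that $c_{n-k}=\overline{c_k}$ as well, so $c_n[f]$ is Hermitian circulant. Consequently $\sin c_n[f]$ is Hermitian circulant, its diagonalisation reads $\sin c_n[f]=F_n^{*}\Lambda_n F_n$ with real $\Lambda_n$, and $|\sin c_n[f]|=F_n^{*}|\Lambda_n|F_n$. Therefore
\[ Q_n := |\sin c_n[f]|^{-1}\sin c_n[f] = F_n^{*}\,\operatorname{sign}(\Lambda_n)\,F_n, \]
which has eigenvalues $\pm 1$ (assuming no eigenvalue of $\sin c_n[f]$ vanishes, which is implied by the hypothesis $\|(\sin c_n[f])^{-1}\|_2$ being uniformly bounded), and is therefore both unitary and Hermitian.

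To control the remaining two terms I would use the uniform boundedness hypothesis, noting that since $|\sin c_n[f]|$ is normal with the same singular values as $\sin c_n[f]$, we have $\||\sin c_n[f]|^{-1}\|_2 = \|(\sin c_n[f])^{-1}\|_2 \le K$ for some constant $K$ independent of $n$. Then $\widetilde{\mathcal{R}}_n[f] = -|\sin c_n[f]|^{-1}\mathcal{R}_n[f]$ has rank at most $2M$ (multiplication on the left does not increase rank), and
\[ \|\widetilde{\mathcal{E}}_n[f]\|_2 = \bigl\||\sin c_n[f]|^{-1}\mathcal{E}_n[f]\bigr\|_2 \le K\,\epsilon, \]
which is absorbed into $\epsilon$ by the usual rescaling.

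The only mildly delicate point is confirming that $Q_n$ is literally unitary and Hermitian, as opposed to merely close to such a matrix; everything else is a one-line consequence of Theorem \ref{thm:aminSINETdecomp} combined with the boundedness hypothesis. I expect no substantive obstacle.
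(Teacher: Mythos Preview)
Your proposal is correct and follows essentially the same approach as the paper: identify $Q_n=|\sin c_n[f]|^{-1}\sin c_n[f]=F_n^{*}\operatorname{sign}(\Lambda_n)F_n$ as the unitary Hermitian sign matrix, then multiply the low-rank plus small-norm decomposition through by $|\sin c_n[f]|^{-1}$, using the uniform bound on $\|(\sin c_n[f])^{-1}\|_2=\||\sin c_n[f]|^{-1}\|_2$. The only cosmetic difference is that the paper first passes through Corollary~\ref{coro:aminSINETprecon} and then multiplies by $Q_n$ (writing $|\sin c_n[f]|^{-1}=Q_n(\sin c_n[f])^{-1}$), whereas you apply $|\sin c_n[f]|^{-1}$ directly to the difference from Theorem~\ref{thm:aminSINETdecomp}; the resulting $\widetilde{\mathcal{R}}_n[f]$ and $\widetilde{\mathcal{E}}_n[f]$ coincide.
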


\begin{proof}
Using a similar approach proposed in \cite{doi:10.1137/140974213}, we want to show that $|\sin{c_n[f]}|$ is an effective Hermitian positive definite preconditioner for $\sin{A_n[f]}$ under the assumptions.

As $\sin{c_n[f]}$ is a circulant matrix we write $\sin{c_n[f]} = F_n^* (\sin{\Lambda_n})F_n$ where $\sin{\Lambda_n}$ is the diagonal matrix with the eigenvalues of $\sin{c_n[f]}$. We then immediately have 
\begin{eqnarray}\nonumber
|\sin{c_n[f]}| &=& F_n^* |\sin{\Lambda_n}|F_n \\\nonumber
&=&  F_n^* (\sin{\Lambda_n})F_n \underbrace{F_n^* (\text{sign}(\sin{\Lambda_n}))^{-1}F_n}_{Q_n}\\\label{eqn:abdC}
&=& \sin{c_n[f]} Q_n,
\end{eqnarray}
where $\text{sign}(\sin{\Lambda_n})= \text{diag}(\frac{\sin{\lambda_i}}{|\sin{\lambda_i|}})=\text{diag}(\pm1)$ and $Q_n$ is both unitary and involutory (i.e. $Q_n^2=I_n$). It is noted that $|\sin{\lambda_i|} \neq 0$ for $i=1,2, \dots,n$ by the assumption, so that $\text{sign}(\sin{\Lambda_n})$ is well defined.

By Corollary \ref{coro:aminSINETprecon}, we know that for all $\epsilon > 0$ there exist positive integers $N$ and $M$ such that for all $n > N$ \[ (\sin{c_n[f]})^{-1}(\sin{A_n[f]})=I_n + \widehat{\mathcal{R}}_n[f] + \widehat{\mathcal{E}}_n[f],\] where \[ \text{rank } \widehat{\mathcal{R}}_n[f]\leq 2M, \] \[ \|\widehat{\mathcal{E}}_n[f]\|_2 \leq \epsilon.\]

Using (\ref{eqn:abdC}), we have \[ |\sin{c_n[f]}|^{-1}\sin{A_n[f]} = Q_n \sin{c_n[f]}^{-1}\sin{A_n[f]} = Q_n + \underbrace{Q_n\widehat{\mathcal{R}}_n[f]}_{\widetilde{\mathcal{R}}_n[f]} + \underbrace{Q_n\widehat{\mathcal{E}}_n[f]}_{\widetilde{\mathcal{E}}_n[f]}.  \]

Since $Q_n$ is unitary, we know 
\[
\text{rank}~\widetilde{\mathcal{R}}_n[f] =\text{rank}(Q_n\widehat{\mathcal{R}}_n[f])= \text{rank}~\widehat{\mathcal{R}}_n[f] \leq 2M
\] and 
\[
\|\widetilde{\mathcal{E}}_n[f]\|_2 = \|Q_n\widehat{\mathcal{E}}_n[f]\|_2 =  \|\widehat{\mathcal{E}}_n[f]\|_2 \leq \epsilon.
\] The result follows.
\qed
\end{proof}

\begin{remark}
Since $|\sin{c_n[f]}|$ is also a circulant matrix, $|\sin{c_n[f]}|^{-1}\mathbf{d}$ for any vector $\mathbf{d}$ can be efficiently computed by several FFTs in $\mathcal{O}(n\log{n})$ operations.
\end{remark} 

For the more general case when $\sin{A_n[f]}$ is non-Hermitian, we can use CG for the normal equations system with the preconditioner  $\sin{c_n[f]}$.

\begin{corollary}
\label{coro:maininvsinNORMALMETRIC}
Let $f \in \mathcal{C}_{2\pi}$. Let $A_n[f]\in \mathbb{C}^{n \times n}$ be the Toeplitz matrix generated by $f$ and $c_n[f]\in \mathbb{C}^{n \times n}$ be the optimal circulant preconditioner for $A_n[f]$. If $\|(\sin{c_n[f]})^{-1}  \|_2$ is uniformly bounded with respect to $n$, then for all $\epsilon > 0$ there exist positive integers $N$ and $M$ such that for all $n > N$ 
\[ [(\sin{c_n[f]})^{-1} \sin{A_n[f]}]^{*}[(\sin{c_n[f]})^{-1} \sin{A_n[f]}]=I_n+ \overline{\mathcal{R}}_n[f]+\overline{\mathcal{E}}_n[f],\] where \[ \text{rank}~\overline{\mathcal{R}}_n[f]\leq 4M, \] \[ \|\overline{\mathcal{E}}_n[f]\|_2 \leq \epsilon.\]

\end{corollary}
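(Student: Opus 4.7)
The plan is to mirror exactly the strategy used in the proof of Corollary \ref{coro:maininvexpmNORMALMETRIC}, but replace the exponential preconditioning decomposition by the sine version, that is, Corollary \ref{coro:aminSINETprecon}. Since the hypothesis that $\|(\sin c_n[f])^{-1}\|_2$ is uniformly bounded in $n$ is exactly what Corollary \ref{coro:aminSINETprecon} requires, I may invoke it to obtain, for any $\epsilon>0$, positive integers $N$ and $M$ such that for all $n>N$,
\[ (\sin c_n[f])^{-1}\sin A_n[f] = I_n + \widehat{\mathcal{R}}_n[f] + \widehat{\mathcal{E}}_n[f], \]
with $\mathrm{rank}\,\widehat{\mathcal{R}}_n[f]\le 2M$ and $\|\widehat{\mathcal{E}}_n[f]\|_2\le\epsilon$.

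Next I would form the normal-equations matrix by writing $T_n := (\sin c_n[f])^{-1}\sin A_n[f]$ and expanding $T_n^{*}T_n = (I_n + \widehat{\mathcal{R}}_n[f] + \widehat{\mathcal{E}}_n[f])^{*}(I_n + \widehat{\mathcal{R}}_n[f] + \widehat{\mathcal{E}}_n[f])$. Collecting every cross term that contains a factor of $\widehat{\mathcal{R}}_n[f]$ into one piece and the remaining $\widehat{\mathcal{E}}_n[f]$-only terms into another, I would define
\[ \overline{\mathcal{R}}_n[f] := \widehat{\mathcal{R}}_n[f]^{*}(I_n+\widehat{\mathcal{R}}_n[f]+\widehat{\mathcal{E}}_n[f]) + (I_n+\widehat{\mathcal{E}}_n[f]^{*})\widehat{\mathcal{R}}_n[f], \]
\[ \overline{\mathcal{E}}_n[f] := \widehat{\mathcal{E}}_n[f] + \widehat{\mathcal{E}}_n[f]^{*} + \widehat{\mathcal{E}}_n[f]^{*}\widehat{\mathcal{E}}_n[f]. \]

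The rank bound on $\overline{\mathcal{R}}_n[f]$ then follows from subadditivity of rank and the fact that $\mathrm{rank}(AB)\le \min(\mathrm{rank}\,A,\mathrm{rank}\,B)$, giving $\mathrm{rank}\,\overline{\mathcal{R}}_n[f]\le 2\cdot 2M = 4M$. The norm bound on $\overline{\mathcal{E}}_n[f]$ follows from the triangle inequality and submultiplicativity of $\|\cdot\|_2$, yielding $\|\overline{\mathcal{E}}_n[f]\|_2 \le 2\epsilon + \epsilon^2$. Finally, because $\epsilon$ was arbitrary, I would replace the original $\epsilon$ by some $\epsilon'>0$ chosen so that $2\epsilon'+\epsilon'^{2}\le\epsilon$ (e.g. $\epsilon' = \min(1,\epsilon/3)$) and adjust $N$ and $M$ accordingly.

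There is really no substantive obstacle here: the result is a purely algebraic consequence of Corollary \ref{coro:aminSINETprecon}, and the entire argument is a verbatim transcription of the exponential case handled in Corollary \ref{coro:maininvexpmNORMALMETRIC}. The only mild bookkeeping issue will be ensuring that the $\epsilon$ appearing in the hypothesis of Corollary \ref{coro:aminSINETprecon} is rescaled correctly so that the quadratic term $\epsilon^{2}$ in the final norm bound still yields the desired $\|\overline{\mathcal{E}}_n[f]\|_2\le\epsilon$.
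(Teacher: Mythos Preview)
Your proposal is correct and follows essentially the same approach as the paper: invoke Corollary~\ref{coro:aminSINETprecon}, expand the product $(I_n+\widehat{\mathcal{R}}_n[f]+\widehat{\mathcal{E}}_n[f])^{*}(I_n+\widehat{\mathcal{R}}_n[f]+\widehat{\mathcal{E}}_n[f])$, and group the terms into exactly the $\overline{\mathcal{R}}_n[f]$ and $\overline{\mathcal{E}}_n[f]$ you wrote down, with the same rank and norm bounds. In fact you are slightly more careful than the paper, which records the bound $\|\overline{\mathcal{E}}_n[f]\|_2\le\epsilon^2+2\epsilon$ without explicitly rescaling $\epsilon$.
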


\begin{proof}
By Corollary \ref{coro:aminSINETprecon}, we know that for all $\epsilon > 0$ there exist positive integers $N$ and $M$ such that for all $n > N$ \[ (\sin{c_n[f]})^{-1}\sin{A_n[f]}=I_n + \widehat{\mathcal{R}_n}[f] + \widehat{\mathcal{E}_n}[f],\] where \[ \text{rank}~\widehat{\mathcal{R}}_n[f]\leq 2M, \] \[ \|\widehat{\mathcal{E}}_n[f]\|_2 \leq \epsilon.\]

We then have
\begin{eqnarray}\nonumber
&&[(\sin{c_n[f]})^{-1} \sin{A_n[f]}]^{*}[(\sin{c_n[f]})^{-1} \sin{A_n[f]}]\\\nonumber
 &=& (I_n + \widehat{\mathcal{R}}_n[f] + \widehat{\mathcal{E}}_n[f])^{*} (I_n + \widehat{\mathcal{R}}_n[f] + \widehat{\mathcal{E}}_n[f])\\\nonumber
&=& I_n + \underbrace{\widehat{\mathcal{R}}_n[f]^{*}(I_n+\widehat{\mathcal{R}}_n[f]+\widehat{\mathcal{E}}_n[f]) + (I_n+\widehat{\mathcal{E}_n}[f]^{*})\widehat{\mathcal{R}}_n[f]}_{\overline{\mathcal{R}}_n[f]} \\\nonumber
&& +~\underbrace{\widehat{\mathcal{E}}_n[f]+\widehat{\mathcal{E}}_n[f]^{*}+\widehat{\mathcal{E}}_n[f]^{*}\widehat{\mathcal{E}}_n[f]}_{\overline{\mathcal{E}}_n[f]}.
\end{eqnarray}

It immediately follows that rank $\overline{\mathcal{R}}_n[f]\leq 4M$ and $\|\overline{\mathcal{E}}_n[f]\|_2\leq\epsilon^2 + 2\epsilon$. 
\qed
\end{proof}

Since $[(\sin{c_n[f]})^{-1} \sin{A_n[f]}]^{*}[(\sin{c_n[f]})^{-1} \sin{A_n[f]}]$ in Corollary \ref{coro:maininvsinNORMALMETRIC} is Hermitian, by Weyl's theorem, we know that its eigenvalues are mostly close to $1$ when $n$ is sufficiently large.

Because $\cos{A_n}=\frac{e^{\mathbf{i}A_n}+e^{-\mathbf{i}A_n}}{2}$ for any $A_n \in \mathbb{C}^{n\times n}$, we have the following similar theorem and corollaries for $\cos{A_n[f]}$.

\begin{theorem}
\label{thm:aminCOSETdecomp}
Let $f \in \mathcal{C}_{2\pi}$. Let $A_n[f]\in \mathbb{C}^{n \times n}$ be the Toeplitz matrix generated by $f$ and $c_n[f]\in \mathbb{C}^{n \times n}$ be the optimal circulant preconditioner for $A_n[f]$. For all $\epsilon > 0$, there exist positive integers $N$ and $M$ such that for all $n > N$ \[ \cos{{c_n[f]}} - \cos{A_n[f]} = \mathscr{R}_n[f] + \mathscr{E}_n[f],\] where \[ \text{rank } \mathscr{R}_n[f]\leq 2M, \] \[ \|\mathscr{E}_n[f]\|_2 \leq \epsilon.\]
\end{theorem}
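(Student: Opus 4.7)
The plan is to reduce Theorem \ref{thm:aminCOSETdecomp} to Theorem \ref{thm:aminCTdecomp_chap5} by the same device used in the proof of Theorem \ref{thm:aminSINETdecomp}, with the sine identity replaced by the cosine identity $\cos A_n = (e^{\mathbf{i}A_n} + e^{-\mathbf{i}A_n})/2$. Linearity of the Toeplitz and optimal-circulant constructions in the generating function gives $A_n[\pm\mathbf{i}f] = \pm\mathbf{i}\,A_n[f]$ and $c_n[\pm\mathbf{i}f] = \pm\mathbf{i}\,c_n[f]$, so that $e^{\pm\mathbf{i}A_n[f]} = e^{A_n[\pm\mathbf{i}f]}$ and $e^{\pm\mathbf{i}c_n[f]} = e^{c_n[\pm\mathbf{i}f]}$.

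First I would apply Theorem \ref{thm:aminCTdecomp_chap5} separately to the two functions $\pm\mathbf{i}f$, both of which lie in $\mathcal{C}_{2\pi}$, taking the maximum of the two resulting values of $N$ and $M$ so that a single pair of parameters works for both invocations. This produces, for all $n > N$, decompositions
\[ e^{c_n[\pm\mathbf{i}f]} - e^{A_n[\pm\mathbf{i}f]} = R_n[\pm\mathbf{i}f] + E_n[\pm\mathbf{i}f], \]
with $\text{rank}\,R_n[\pm\mathbf{i}f] \leq 2M$ and $\|E_n[\pm\mathbf{i}f]\|_2 \leq \epsilon$. Averaging these two identities then yields
\[ \cos c_n[f] - \cos A_n[f] = \underbrace{\tfrac{1}{2}\bigl(R_n[\mathbf{i}f] + R_n[-\mathbf{i}f]\bigr)}_{\mathscr{R}_n[f]} + \underbrace{\tfrac{1}{2}\bigl(E_n[\mathbf{i}f] + E_n[-\mathbf{i}f]\bigr)}_{\mathscr{E}_n[f]}. \]
The norm bound $\|\mathscr{E}_n[f]\|_2 \leq \epsilon$ is immediate from the triangle inequality. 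For the rank bound I would argue exactly as in the proof of Theorem \ref{thm:aminSINETdecomp}: each $R_n[\pm\mathbf{i}f]$ is a block matrix whose nonzero entries are confined to four corner blocks of a common size, and the half-sum inherits this four-corner support, so it has at most $2M$ nonzero rows and therefore $\text{rank}\,\mathscr{R}_n[f] \leq 2M$.

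I do not foresee any serious obstacle, since the argument is essentially a transcription of the sine case. The only bookkeeping concern is matching the parameters produced by the two separate applications of Theorem \ref{thm:aminCTdecomp_chap5}. This is handled by choosing a single trigonometric polynomial approximant $p_M$ to $f$ (so that $\pm\mathbf{i}p_M$ serves simultaneously as the approximant to $\pm\mathbf{i}f$ with the same degree $M$) and by replacing $N$ with the larger of the two thresholds, which guarantees that both decompositions hold in a compatible way on the same range $n > N$.
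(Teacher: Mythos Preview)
Your proposal is correct and follows precisely the approach the paper indicates: the paper does not write out a separate proof for Theorem \ref{thm:aminCOSETdecomp} but simply notes that, since $\cos A_n = (e^{\mathbf{i}A_n}+e^{-\mathbf{i}A_n})/2$, the argument for Theorem \ref{thm:aminSINETdecomp} carries over verbatim with the sine identity replaced by the cosine identity. Your handling of the bookkeeping (choosing a common $p_M$ and the larger $N$) is in fact slightly more careful than what the paper makes explicit in the sine case.
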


\begin{corollary}
\label{coro:aminCOSETprecon}
Let $f \in \mathcal{C}_{2\pi}$. Let $A_n[f]\in \mathbb{C}^{n \times n}$ be the Toeplitz matrix generated by $f$ and $c_n[f]\in \mathbb{C}^{n \times n}$ be the optimal circulant preconditioner for $A_n[f]$. If $\| (\cos{c_n[f]})^{-1}  \|_2$ is uniformly bounded with respect to $n$, then for all $\epsilon > 0$ there exist positive integers $N$ and $M$ such that for all $n > N$  \[ |\cos{c_n[f]}|^{-1}\cos{A_n[f]}=Q_n + \widetilde{\mathscr{R}}_n[f] + \widetilde{\mathscr{E}}_n[f], \] where $Q_n$ is unitary and Hermitian, \[ \text{rank } \widetilde{\mathscr{R}}_n[f]\leq 2M, \] \[ \|\widetilde{\mathscr{E}}_n[f]\|_2 \leq \epsilon.\] 
\end{corollary}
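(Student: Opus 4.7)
The plan is to mirror closely the argument used for Corollary \ref{coro:maininvSINSYMMETRIC}, substituting cosine for sine throughout. First, under the uniform bound on $\|(\cos c_n[f])^{-1}\|_2$, Theorem \ref{thm:aminCOSETdecomp} upgrades to a cosine analog of Corollary \ref{coro:aminSINETprecon}: starting from $\cos c_n[f] - \cos A_n[f] = \mathscr{R}_n[f] + \mathscr{E}_n[f]$ and premultiplying by $(\cos c_n[f])^{-1}$ gives
\[
(\cos c_n[f])^{-1}\cos A_n[f] = I_n + \widehat{\mathscr{R}}_n[f] + \widehat{\mathscr{E}}_n[f],
\]
where $\widehat{\mathscr{R}}_n[f] = -(\cos c_n[f])^{-1}\mathscr{R}_n[f]$ still has rank at most $2M$ (multiplication by a nonsingular matrix preserves rank) and $\widehat{\mathscr{E}}_n[f] = -(\cos c_n[f])^{-1}\mathscr{E}_n[f]$ is bounded in spectral norm by the uniform constant times $\epsilon$; after absorbing the constant into $\epsilon$, this becomes $\epsilon$.

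Next, I would relate $|\cos c_n[f]|$ to $\cos c_n[f]$ via a unitary Hermitian factor, exactly as in the sine case. Diagonalising the circulant $\cos c_n[f] = F_n^*(\cos \Lambda_n)F_n$, the hypothesis ensures every eigenvalue $\cos\lambda_i$ is nonzero, so $\text{sign}(\cos \Lambda_n)$ is a well-defined real diagonal matrix with entries $\pm 1$. Setting $Q_n := F_n^* (\text{sign}(\cos \Lambda_n))^{-1} F_n$, a direct computation yields $|\cos c_n[f]| = \cos c_n[f]\,Q_n$; moreover, $Q_n$ is both unitary and Hermitian (its symbol is real diagonal with $\pm 1$ entries) and involutory, so $|\cos c_n[f]|^{-1} = Q_n (\cos c_n[f])^{-1}$.

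Finally, multiplying the first identity by $Q_n$ on the left gives
\[
|\cos c_n[f]|^{-1}\cos A_n[f] = Q_n + \underbrace{Q_n\widehat{\mathscr{R}}_n[f]}_{\widetilde{\mathscr{R}}_n[f]} + \underbrace{Q_n\widehat{\mathscr{E}}_n[f]}_{\widetilde{\mathscr{E}}_n[f]},
\]
and since $Q_n$ is unitary, the rank bound $\text{rank }\widetilde{\mathscr{R}}_n[f] \le 2M$ and the spectral-norm bound $\|\widetilde{\mathscr{E}}_n[f]\|_2 \le \epsilon$ both transfer from their hatted counterparts. The proof is essentially mechanical once Theorem \ref{thm:aminCOSETdecomp} and the uniform-boundedness hypothesis are in hand; the only point requiring care is well-definedness of $\text{sign}(\cos \Lambda_n)$, and this is precisely what the hypothesis on $\|(\cos c_n[f])^{-1}\|_2$ is there to guarantee. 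I do not anticipate any obstacle beyond this bookkeeping.
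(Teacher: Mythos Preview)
Your proposal is correct and follows exactly the route the paper intends: the cosine corollary is stated without proof immediately after the sine analogs, with the paper indicating that the argument is identical to that of Corollary~\ref{coro:maininvSINSYMMETRIC} via Theorem~\ref{thm:aminCOSETdecomp}. Your write-up reproduces that argument step for step, including the intermediate identity-plus-low-rank-plus-small-norm decomposition (the cosine analog of Corollary~\ref{coro:aminSINETprecon}) and the factorisation $|\cos c_n[f]| = \cos c_n[f]\,Q_n$ with $Q_n$ unitary, Hermitian, and involutory.
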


For the more general case when $\cos{A_n[f]}$ is non-Hermitian, we can use CG for the normal equations system with the preconditioner  $\cos{c_n[f]}$.

\begin{corollary}
\label{coro:maininvcosNORMALMETRIC}
Let $f \in \mathcal{C}_{2\pi}$. Let $A_n[f]\in \mathbb{C}^{n \times n}$ be the Toeplitz matrix generated by $f$ and $c_n[f]\in \mathbb{C}^{n \times n}$ be the optimal circulant preconditioner for $A_n[f]$. If $\| (\cos{c_n[f]})^{-1}  \|_2$ is uniformly bounded with respect to $n$, then for all $\epsilon > 0$ there exist positive integers $N$ and $M$ such that for all $n > N$ 
\[ [(\cos{c_n[f]})^{-1} \cos{A_n[f]}]^{*}[(\cos{c_n[f]})^{-1} \cos{A_n[f]}]=I_n+ \overline{\mathscr{R}}_n[f]+\overline{\mathscr{E}}_n[f],\] where \[ \text{rank}~\overline{\mathscr{R}}_n[f]\leq 4M, \] \[ \|\overline{\mathscr{E}}_n[f]\|_2 \leq \epsilon.\]

\end{corollary}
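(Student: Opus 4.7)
The plan is to mirror the proof of Corollary \ref{coro:maininvsinNORMALMETRIC} almost verbatim, since $\cos$ plays exactly the same role as $\sin$ in the underlying machinery. The only ingredient specific to $\cos$ is its expression $\cos A_n = \tfrac{1}{2}(e^{\mathbf{i} A_n} + e^{-\mathbf{i} A_n})$, which is exploited to prove Corollary \ref{coro:aminCOSETprecon}; once that decomposition $(\cos c_n[f])^{-1}\cos A_n[f] = I_n + \widehat{\mathscr{R}}_n[f] + \widehat{\mathscr{E}}_n[f]$ is in hand, the passage to the normal equations is purely algebraic and identical to the sine case.

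First I would invoke Corollary \ref{coro:aminCOSETprecon}: under the uniform boundedness of $\|(\cos c_n[f])^{-1}\|_2$, for every $\epsilon>0$ there exist $N,M$ such that for all $n>N$ one has the decomposition above with $\text{rank}\,\widehat{\mathscr{R}}_n[f]\leq 2M$ and $\|\widehat{\mathscr{E}}_n[f]\|_2\leq \epsilon$. Then I would expand
\begin{eqnarray}\nonumber
&&[(\cos c_n[f])^{-1}\cos A_n[f]]^{*}[(\cos c_n[f])^{-1}\cos A_n[f]]\\\nonumber
&=& (I_n + \widehat{\mathscr{R}}_n[f] + \widehat{\mathscr{E}}_n[f])^{*}(I_n + \widehat{\mathscr{R}}_n[f] + \widehat{\mathscr{E}}_n[f]) \\\nonumber
&=& I_n + \underbrace{\widehat{\mathscr{R}}_n[f]^{*}(I_n+\widehat{\mathscr{R}}_n[f]+\widehat{\mathscr{E}}_n[f]) + (I_n+\widehat{\mathscr{E}}_n[f]^{*})\widehat{\mathscr{R}}_n[f]}_{\overline{\mathscr{R}}_n[f]} \\\nonumber
&& +~\underbrace{\widehat{\mathscr{E}}_n[f] + \widehat{\mathscr{E}}_n[f]^{*} + \widehat{\mathscr{E}}_n[f]^{*}\widehat{\mathscr{E}}_n[f]}_{\overline{\mathscr{E}}_n[f]},
\end{eqnarray}
grouping all terms containing at least one factor of $\widehat{\mathscr{R}}_n[f]$ into $\overline{\mathscr{R}}_n[f]$ and the rest into $\overline{\mathscr{E}}_n[f]$.

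The rank estimate follows from subadditivity of rank and the fact that $\text{rank}(XY), \text{rank}(YX)\leq \text{rank}(Y)$: $\overline{\mathscr{R}}_n[f]$ is a sum of matrices each with rank at most $2M$, with only two ``independent'' occurrences of $\widehat{\mathscr{R}}_n[f]$ (one from each factor), giving $\text{rank}\,\overline{\mathscr{R}}_n[f]\leq 4M$. The norm estimate is also immediate: by the triangle inequality and submultiplicativity of $\|\cdot\|_2$,
\[
\|\overline{\mathscr{E}}_n[f]\|_2 \leq 2\|\widehat{\mathscr{E}}_n[f]\|_2 + \|\widehat{\mathscr{E}}_n[f]\|_2^{2} \leq 2\epsilon + \epsilon^{2}.
\]
Replacing the given $\epsilon$ by $\epsilon'$ with $\epsilon' + 2\sqrt{\epsilon'} \leq \epsilon$ (or simply absorbing constants at the start) produces the stated bound $\|\overline{\mathscr{E}}_n[f]\|_2\leq \epsilon$.

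There is no real obstacle here; the only place judgement is required is in choosing how to split the expanded product into a low-rank part and a small-norm part, and I would do it in exactly the way written above so that each summand of $\overline{\mathscr{R}}_n[f]$ factors through $\widehat{\mathscr{R}}_n[f]$. After that, rank subadditivity and the triangle inequality finish the proof, and the assertion about eigenvalues clustering near $1$ (via Weyl's theorem on the Hermitian matrix) is analogous to the remark following Corollary \ref{coro:maininvsinNORMALMETRIC}.
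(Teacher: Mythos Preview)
Your approach is correct and is precisely the one the paper intends (the paper gives no explicit proof for this corollary, relying on the parallel with Corollary~\ref{coro:maininvsinNORMALMETRIC}). One small citation issue: the result you invoke as Corollary~\ref{coro:aminCOSETprecon} is, in the paper, the statement about $|\cos c_n[f]|^{-1}\cos A_n[f] = Q_n + \widetilde{\mathscr{R}}_n[f] + \widetilde{\mathscr{E}}_n[f]$, not the $I_n + \widehat{\mathscr{R}}_n[f] + \widehat{\mathscr{E}}_n[f]$ decomposition you need. The paper does not state the cosine analogue of Corollary~\ref{coro:aminSINETprecon} separately; you should instead derive $(\cos c_n[f])^{-1}\cos A_n[f] = I_n + \widehat{\mathscr{R}}_n[f] + \widehat{\mathscr{E}}_n[f]$ directly from Theorem~\ref{thm:aminCOSETdecomp} together with the uniform bound on $\|(\cos c_n[f])^{-1}\|_2$, exactly as in the proof of Corollary~\ref{coro:aminSINETprecon}. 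With that adjustment your argument matches the paper.
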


Since $[(\cos{c_n[f]})^{-1} \cos{A_n[f]}]^{*}[(\cos{c_n[f]})^{-1} \cos{A_n[f]}] $ in Corollary \ref{coro:maininvcosNORMALMETRIC} is Hermitian, by Weyl's theorem, we know that its eigenvalues are mostly close to $1$ when $n$ is sufficiently large.

\section{Extension to analytic functions of Toeplitz matrices}

\begin{lemma}\cite[Theorem~1.18]{MR2396439}
\label{lem:analyticfunctionproperties}
Let $h$ be analytic on an open subset $\Omega\subseteq \mathbb{C}$ such that each connected component of $\Omega$ is closed under conjugation. Consider the corresponding matrix function $h$ on its natural domain in $\mathbb{C}^{n \times n}$, the set $\mathcal{D}=\{A_n \in \mathbb{C}^{n \times n}: \Lambda(A_n)\subseteq \Omega\}$. Then  the following are equivalent:

{(a)} $h(A_n^*)=h(A_n)^*$ for all $A_n \in \mathcal{D}$.
\vspace{1mm}

{(b)} $h(\overline{A_n})=\overline{h(A_n)}$ for all $A_n \in \mathcal{D}$.
\vspace{1mm}

{(c)} $h(\mathbb{R}^{n\times n}\cap\mathcal{D})\subseteq \mathbb{R}^{n\times n}$.
\vspace{1mm}

{(d)} $h(\mathbb{R}\cap\Omega)\subseteq \mathbb{R}$.

\end{lemma}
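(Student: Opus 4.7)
My approach is to prove the equivalences via the cyclic chain $(d)\Rightarrow(b)\Rightarrow(a)\Rightarrow(d)$, closed off with $(b)\Rightarrow(c)\Rightarrow(d)$. All implications except the first are short once the scalar identity $h(\overline{z})=\overline{h(z)}$ has been lifted to matrices, so the plan is to concentrate the real work on $(d)\Rightarrow(b)$ and handle everything else as corollaries.

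First I would establish $(d)\Rightarrow(b)$ at the scalar level by defining $g(z):=\overline{h(\overline{z})}$, which is analytic on $\overline{\Omega}=\Omega$, and noting that under $(d)$ the functions $g$ and $h$ agree on $\mathbb{R}\cap\Omega$. The key topological observation is that every connected component $C$ of $\Omega$ must meet $\mathbb{R}$: if $C$ were closed under conjugation but disjoint from $\mathbb{R}$, it would split into the two disjoint non-empty open pieces $C\cap\{\mathrm{Im}\,z>0\}$ and $C\cap\{\mathrm{Im}\,z<0\}$, contradicting connectedness. The identity theorem then yields $h(\overline{z})=\overline{h(z)}$ throughout $\Omega$. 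To promote this to matrices, I would use the Hermite interpolating polynomial representation of $h(A_n)$ (as in \cite[Definition~1.4]{MR2396439}): picking $p$ so that $h(A_n)=p(A_n)$, the coefficient-conjugated polynomial $\overline{p}$ Hermite-interpolates $h$ on the (conjugated) spectrum of $\overline{A_n}$, and hence $h(\overline{A_n})=\overline{p}(\overline{A_n})=\overline{p(A_n)}=\overline{h(A_n)}$.

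The remaining implications are quick. For $(b)\Rightarrow(a)$, I would write $A_n^{*}=\overline{A_n}^{T}$ and use $h(B^{T})=h(B)^{T}$, again from the interpolating polynomial, to obtain $h(A_n^{*})=h(\overline{A_n})^{T}=\overline{h(A_n)}^{T}=h(A_n)^{*}$. For $(a)\Rightarrow(d)$ and $(c)\Rightarrow(d)$, I would specialise to the $1\times 1$ real matrix $(x)$ with $x\in\mathbb{R}\cap\Omega$, which in each case forces $h(x)\in\mathbb{R}$. For $(b)\Rightarrow(c)$, a real $A_n$ satisfies $\overline{A_n}=A_n$, so $(b)$ yields $h(A_n)=\overline{h(A_n)}\in\mathbb{R}^{n\times n}$. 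The main obstacle is the topological/analytic combination in $(d)\Rightarrow(b)$: without the conjugation symmetry of each connected component, condition $(d)$ imposes no information on components lying entirely off the real axis, and the scalar identity $g\equiv h$ could fail there. Once that observation restricts attention to components meeting $\mathbb{R}$, the identity theorem and the polynomial lift to matrices both proceed routinely.
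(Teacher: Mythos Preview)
The paper does not prove this lemma at all: it is quoted verbatim from Higham's monograph \cite[Theorem~1.18]{MR2396439} and used without argument, so there is no in-paper proof to compare against. Your proposal is essentially the standard proof of that theorem and is correct in substance.

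One small fix: as stated, the lemma works in $\mathbb{C}^{n\times n}$ for a \emph{fixed} $n$, so in the steps $(a)\Rightarrow(d)$ and $(c)\Rightarrow(d)$ you cannot literally ``specialise to the $1\times 1$ real matrix $(x)$'' unless $n=1$. The repair is immediate: take $A_n=xI_n$ with $x\in\mathbb{R}\cap\Omega$, which is real and Hermitian, and observe $h(xI_n)=h(x)I_n$; either hypothesis then forces $h(x)\in\mathbb{R}$. With that adjustment your chain $(d)\Rightarrow(b)\Rightarrow(a)\Rightarrow(d)$ together with $(b)\Rightarrow(c)\Rightarrow(d)$ goes through as written, including the topological point that a conjugation-closed connected component disjoint from $\mathbb{R}$ would disconnect into its upper and lower half-plane pieces.
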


\begin{lemma}\cite[Theorem~4.7]{MR2396439}
\label{lem:matrixtaylorseries}
Suppose $h$ has a Taylor series expansion \[ h(z)=\sum_{k=0}^{\infty}a_k(z-\alpha)^k, \] where $a_k=\frac{h^{(k)}(\alpha)}{k!}$, with radius of convergence $r$. If $A_n\in \mathbb{C}^{n\times n}$ then $f(A_n)$ is defined and is given by \[h(A_n) = \sum_{k=0}^{\infty}a_k(A_n-\alpha I_n)^{k}\] if and only if the distinct eigenvalues $\lambda_1$, $\cdots$, $\lambda_s$ of $A_n$ satisfy one of the conditions

{(a)} $|\lambda_i-\alpha|<r$,
\vspace{1mm}

{(b)} $|\lambda_i-\alpha|=r$ and the series for $h^{(n_i-1)}(\lambda)$, where $n_i$ is the index of $\lambda_i$, is convergent at the point $\lambda=\lambda_i$, $i=1,\dots,s$.
\end{lemma}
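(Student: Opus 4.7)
The plan is to reduce the statement to scalar power series via the Jordan canonical form and then invoke the standard equivalence between matrix-valued and scalar-valued convergence on each Jordan block. Writing $A_n = Z J Z^{-1}$ with $J = \mathrm{diag}(J_1,\ldots,J_s)$ in Jordan form and each $J_i$ the $n_i \times n_i$ block associated to $\lambda_i$, every partial sum transforms as $\sum_{k=0}^{N}a_k(A_n-\alpha I_n)^{k} = Z\bigl[\sum_{k=0}^{N}a_k(J-\alpha I)^{k}\bigr]Z^{-1}$, and the right-hand side is block-diagonal in the blocks $J_i - \alpha I$. So the matrix series converges if and only if it converges on each block, and in that case the limit equals $Z\, h(J)\, Z^{-1}$, which is by definition $h(A_n)$.

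For a fixed block I would write $J_i - \alpha I_{n_i} = (\lambda_i-\alpha)I_{n_i} + N_i$ with $N_i$ the nilpotent shift satisfying $N_i^{n_i}=0$. The binomial theorem gives $(J_i-\alpha I_{n_i})^k = \sum_{j=0}^{\min(k,n_i-1)}\binom{k}{j}(\lambda_i-\alpha)^{k-j}N_i^j$. Because the $j$-sum is finite (truncated at $n_i - 1$), interchange with the $k$-sum is unconditional and yields
\[
\sum_{k=0}^{\infty} a_k (J_i-\alpha I_{n_i})^k
= \sum_{j=0}^{n_i-1} \frac{N_i^j}{j!} \sum_{k\geq j} a_k \frac{k!}{(k-j)!}(\lambda_i-\alpha)^{k-j},
\]
whose inner scalar series is precisely the termwise $j$-th derivative of the Taylor series for $h$, evaluated at $\lambda_i$. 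Convergence of the matrix series is therefore equivalent to simultaneous convergence at $\lambda_i$ of the scalar series for $h^{(j)}$, $j=0,1,\ldots,n_i-1$; on convergence the limit equals $\sum_{j=0}^{n_i-1} h^{(j)}(\lambda_i)\, N_i^j / j!$, which matches the usual Jordan-form definition of $h(J_i)$ entry-by-entry.

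Case (a), $|\lambda_i-\alpha|<r$, is then immediate, since term-by-term differentiation preserves the radius of convergence and a power series converges absolutely strictly inside its disk. Case (b) is the only delicate part and is the main obstacle: on the boundary of the disk, differentiation may destroy convergence, so one cannot freely pass from a convergent series for $h$ to one for $h^{(n_i-1)}$. The hypothesis, however, gives convergence of the topmost needed series $h^{(n_i-1)}$ at $\lambda_i$; convergence of every lower-order series at $\lambda_i$ then follows by downward induction using Abel summation (Dirichlet's test), since if $\sum_k c_k$ converges then so does $\sum_k c_k/(k+\mathrm{const})$, and the coefficient series of $h^{(j)}$ differs from that of $h^{(j+1)}$, after reindexing, precisely by division by a positive integer. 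With all $n_i$ scalar series converging at $\lambda_i$, the matrix series converges and the proof is complete.
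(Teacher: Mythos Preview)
The paper does not give its own proof of this lemma; it is quoted verbatim from Higham's monograph (the cited reference [Theorem~4.7]{MR2396439}) and used as a black box. So there is nothing in the paper to compare against, and the relevant question is whether your argument stands on its own. It does: the Jordan-form reduction, the finite binomial expansion of $(J_i-\alpha I)^k$, and the identification of the superdiagonal coefficients with the partial sums of the termwise derivative series are exactly the standard route (and essentially what Higham does). Your Abel-summation step for case~(b) is correct: if $\sum_m d_m$ converges with bounded partial sums $S_m$, then $\sum_m d_m/(m+1)=\lim_N\bigl[S_N/(N{+}1)+\sum_{m<N}S_m/((m{+}1)(m{+}2))\bigr]$ converges, which is precisely what you need to descend from $h^{(j+1)}$ to $h^{(j)}$.

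One notational looseness worth tightening: you write $J=\mathrm{diag}(J_1,\ldots,J_s)$ with $J_i$ ``the $n_i\times n_i$ block associated to $\lambda_i$'', but a single eigenvalue may contribute several Jordan blocks of varying sizes, and $n_i$ in the statement is the \emph{index} of $\lambda_i$, i.e.\ the size of its largest block. Your per-block analysis is unaffected---on a block of size $m$ for $\lambda_i$ you need the derivative series through order $m-1$---and the binding constraint across all blocks for $\lambda_i$ is exactly convergence of the series for $h^{(n_i-1)}$, so the conclusion is unchanged. Just adjust the wording so that the number of Jordan blocks is not conflated with the number $s$ of distinct eigenvalues.
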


\begin{lemma}\cite[Theorem~4.8]{MR2396439}
\label{lem:matrixtaylorserieserrorbd}
Suppose $h$ has a Taylor series expansion \[ h(z)=\sum_{k=0}^{\infty}a_k(z-\alpha)^k, \] where $a_k=\frac{h^{(k)}(\alpha)}{k!}$, with radius of convergence $r$. If $A_n\in \mathbb{C}^{n\times n}$ with $\rho(A_n-\alpha I_n)<r$ then for any matrix norm $\|\cdot\|$ \[\|h(A_n) - \sum_{k=0}^{K-1}a_k(A_n-\alpha I_n)^{k}\| \leq \frac{1}{K!}\max_{0\leq t \leq 1} {\|(A_n-\alpha I_n)^K h^{(K)}(\alpha I_n + t (A_n-\alpha I_n)) \|}. \]
\end{lemma}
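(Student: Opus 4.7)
The plan is to reduce the claim to the classical scalar Taylor expansion with integral remainder, applied to the matrix-valued function $\phi(t) := h(\alpha I_n + t(A_n - \alpha I_n))$ on the interval $t \in [0,1]$. Writing $B := A_n - \alpha I_n$, the hypothesis $\rho(B) < r$ together with $|t| \leq 1$ gives $\rho(tB) < r$, so Lemma \ref{lem:matrixtaylorseries} guarantees that $\phi(t) = \sum_{k=0}^{\infty} a_k\, t^k B^k$ is well-defined at every $t \in [0,1]$, and in fact for $t$ in a small open neighbourhood of $[0,1]$.

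First I would verify that $\phi$ is smooth on $[0,1]$ and compute its derivatives termwise. Because the power series $\sum a_k t^k B^k$ converges absolutely in any consistent matrix norm on such a neighbourhood (the spectral radius being strictly less than $r$ leaves a safety margin that makes term-by-term differentiation legitimate), one obtains $\phi^{(k)}(t) = h^{(k)}(\alpha I_n + tB)\, B^k$ for every $k \geq 0$. In particular $\phi^{(k)}(0) = h^{(k)}(\alpha)\, B^k = k!\, a_k B^k$ and $\phi(1) = h(A_n)$.

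Next I would apply the standard scalar Taylor formula with integral remainder entrywise to $\phi$, which is legitimate because each matrix entry of $\phi$ is an ordinary smooth scalar function of $t$. A short telescoping computation on $G(s) := \sum_{k=0}^{K-1}\frac{(1-s)^k}{k!}\phi^{(k)}(s)$ gives $G'(s) = \frac{(1-s)^{K-1}}{(K-1)!}\phi^{(K)}(s)$, and integrating from $0$ to $1$ yields
\begin{equation*}
h(A_n) = \sum_{k=0}^{K-1} a_k B^k + \frac{1}{(K-1)!} \int_0^1 (1-t)^{K-1}\, h^{(K)}(\alpha I_n + tB)\, B^K\, dt.
\end{equation*}

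Finally I would take any matrix norm of the remainder term, pull the norm inside the integral via $\|\int (\cdot)\,dt\| \leq \int \|\cdot\|\,dt$, bound the integrand uniformly in $t$ by its maximum on $[0,1]$, and use $\int_0^1 (1-t)^{K-1}\,dt = 1/K$ to turn the $(K-1)!$ denominator into $K!$. Since $B^K$ and $h^{(K)}(\alpha I_n + tB)$ are both polynomials in $B$ they commute, so the product can be reordered as $(A_n - \alpha I_n)^K h^{(K)}(\alpha I_n + t(A_n - \alpha I_n))$ matching the statement. The only delicate point is the justification of termwise differentiation of the matrix power series defining $\phi^{(k)}$; everything else is a mechanical transfer of the scalar integral-form Taylor remainder to the matrix setting.
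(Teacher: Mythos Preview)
The paper does not supply its own proof of this lemma: it is simply quoted as \cite[Theorem~4.8]{MR2396439} and invoked later without argument. Your proposal is a correct, self-contained proof, and it is essentially the same argument Higham gives in the cited reference: define $\phi(t)=h(\alpha I_n+tB)$, compute $\phi^{(k)}(t)=B^k h^{(k)}(\alpha I_n+tB)$, write the Taylor remainder in integral form, and bound the integral using $\int_0^1(1-t)^{K-1}\,dt=1/K$. So there is nothing to compare against in the paper itself, and your approach matches the standard one.
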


We first assume that the condition in Lemma \ref{lem:matrixtaylorseries} is satisfied so $h(A_n)$ can be represented by a Taylor series of $A_n$. Replacing Lemma \ref{lem:suzuki} with Lemma \ref{lem:matrixtaylorserieserrorbd}, we can show that $h(c_n[f])-h(A_n[f])$ can be decomposed into a sum of a matrix of certain rank and a small norm matrix in a similar manner to theorem \ref{thm:aminCTdecomp_chap5}. Further assuming the boundedness of $h(c_n[f])$, we can prove a similar decomposition for $h(c_n[f])^{-1}h(A_n[f])$ or its normal equations matrix.

\section{Numerical results}

In this section, we demonstrate the effectiveness of our proposed preconditioners $g(c_n[f])$ for the systems $g(A_n[f])\mathbf{x}=\mathbf{b}$ using CG, MINRES and GMRES \cite{MR848568}. Throughout all numerical tests, $e^{A_n[f]}$ is computed by the MATLAB built-in function \textbf{expm} whilst $\sin{A_n[f]}$ and $\cos{A_n[f]}$ are computed by \textbf{funm}. Also, we use the function \textbf{pcg} to solve the Hermitian positive definite systems \[ g(A_n[f])\mathbf{x}=\mathbf{b}, \] and
 \[ g(A_n[f])^{*} g(A_n[f])\mathbf{x}= g(A_n[f])^{*}\mathbf{b},\]
 where $\mathbf{b}$ is generated by the function \textbf{randn(n,1)}, with the zero vector as the initial guess. For Hermitian indefinite systems, we use the function \textbf{minres}. As a comparison, GMRES is also used for some systems and it is executed by \textbf{gmres}. The stopping criterion used is \[ \frac{\|\mathbf{r}_j\|_2}{\|\mathbf{b}\|_2} < 10^{-7}, \] where $\mathbf{r}_j$ is the residual vector after $j$ iterations.

Example 1: We first consider $e^{A_n[f]}$, where $A_n[f]$ is generated by several functions $f$ with moderate $\|f\|_{\infty}$. Table \ref{tab:ExpTCG_ch2_hon16} shows the numbers of iterations needed for $e^{A_n[f]}$ with $A_n[f]$ generated by ${f(\theta) = \frac{4}{3}\theta\cos(\theta)}$ with or without preconditioners. It is clear that the proposed precondtioner is efficient for speeding up the rate of convergence of CG. In Figure \ref{fig:exp_512_CG_ch2_hon16} {(a)} and {(b)}, the contrast between the spectra of the matrices is shown. In Figure \ref{fig:exp_512_CG_ch2_hon16} {(c)}, we see that the eigenvalues of the preconditioned matrix are highly clustered near 1. By the analysis on the rate of convergence of preconditioned CG for highly clustered spectrum given in \cite{Axelsson1986}, the preconditioned matrix can be regarded as having an "efficient" condition number $b/a$, where $[a,b]$ is the closed interval in which most of the eigenvalues are clustered. Therefore, a fast convergence rate for preconditioned CG is expected due to the cluster of eigenvalues at $1$ and the small number of outliers.

\begin{table}[h]
\caption{Numbers of iterations with CG for $e^{A_n[f]}$ with the generating ${f(\theta) = \frac{4}{3}\theta\cos(\theta)}$.}
\label{tab:ExpTCG_ch2_hon16}
\begin{center}
{\small
\begin{tabular}{|l|c|c|}\hline
 $n $ & $I_n$  & $e^{c_n[f]}$
\\
\hline
128 & 224 & 20 \\
256 & 325 & 21 \\
512 & 414 & 25 \\
1024 & 491 & 26 \\
\hline
\end{tabular}}
\end{center}
\end{table}

\begin{figure}[htbp]
\centering
\subfloat[]{\includegraphics[scale=0.7]{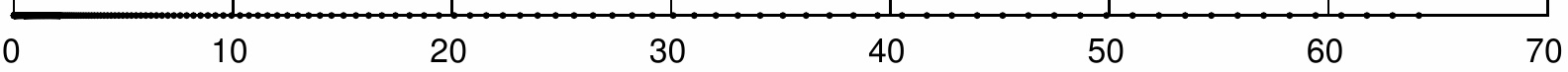}}\\
\subfloat[]{\includegraphics[scale=0.7]{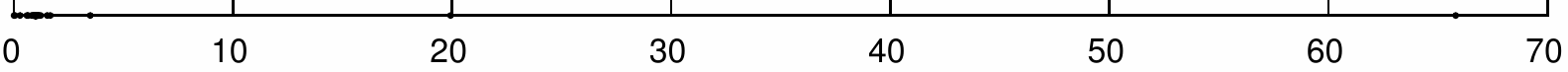}}\\
\subfloat[]{\includegraphics[scale=0.7]{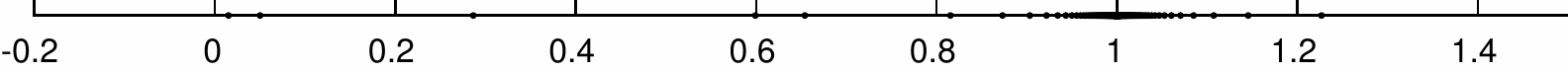}}
\caption{The spectrum of {(a)} $e^{A_n[f]}$ and that of {(b)} ${(e^{c_n[f]})}^{-1}e^{A_n[f]}$. {(c)} The zoom-in spectrum of {(b)}. $A_n[f]$ is generated by ${f(\theta) = \frac{4}{3}\theta\cos(\theta)}$ and $n=512$.}
\label{fig:exp_512_CG_ch2_hon16}
\end{figure}


In Figure \ref{fig:exp_n_PCG_ch2_hon16}, we further show the spectrum of $e^{A_n[f]}$ before and after applying the preconditioner $e^{c_n[f]}$ with different $n$. We observe that the highly clustered spectra seem independent of $n$.

%
%
%

\begin{figure}[htbp]
\centering
{(a)}~\includegraphics[scale=0.7]{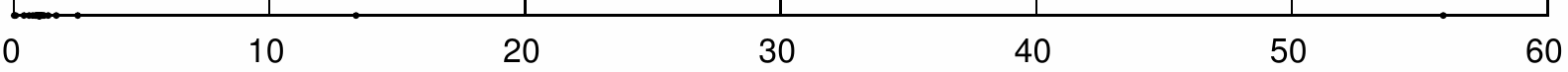}\\
\vspace{0.5cm}
\includegraphics[scale=0.7]{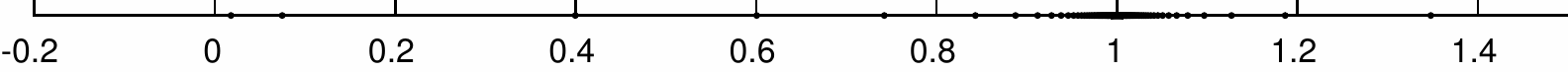}\\
\vspace{2cm}
{(b)}~\includegraphics[scale=0.7]{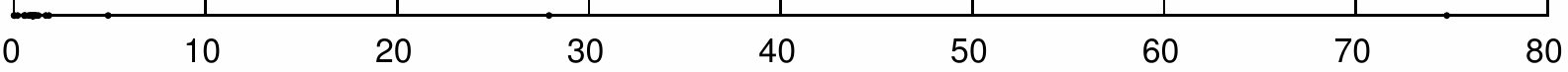}\\
\vspace{0.5cm}
\includegraphics[scale=0.7]{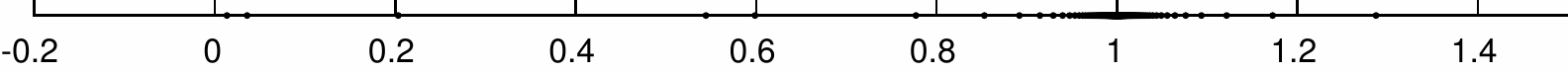}\\
\vspace{2cm}
{(c)}~\includegraphics[scale=0.7]{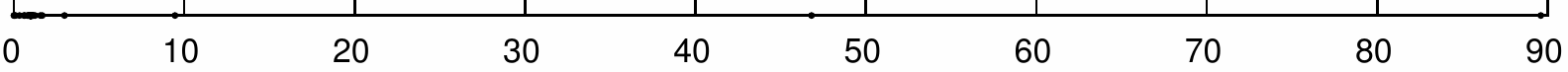}\\
\vspace{0.5cm}
\includegraphics[scale=0.7]{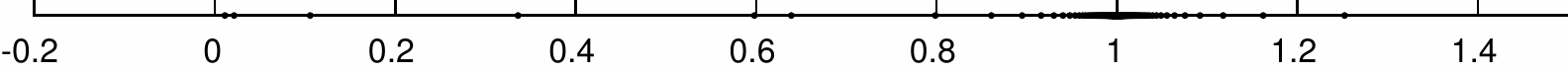}\\

\caption{The spectrum of $e^{A_n[f]}$ and that of ${(e^{c_n[f]})}^{-1}e^{A_n[f]}$ {(a)} $n=256$, {(b)} $n=1024$ or {(c)} $n=4096$. $A_n[f]$ is generated by ${f(\theta) = \frac{4}{3}\theta\cos(\theta)}$.}
\label{fig:exp_n_PCG_ch2_hon16}
\end{figure}

\newpage

Example 2: Table \ref{tab:ExpNormal_ch2_hon16} {(a)} and {(b)} show the numerical results using CG and GMRES for the normal equations matrices of $e^{A_n[f]}$ with $A_n[f]$ generated by ${f(\theta) = 2\theta\cos(\theta)+\theta \mathbf{i}}$, respectively. Again, we observe that the preconditioners are efficient for speeding up the rate of convergence.

\begin{table}[h]
\caption{Numbers of iterations with {(a)} CG for ${(e^{A_n[f]})^*e^{A_n[f]}}$ and {(b)} GMRES for $e^{A_n[f]}$ with $A_n[f]$ generated by ${f(\theta) = 2\theta\cos(\theta)+\theta \mathbf{i}}$.}
\label{tab:ExpNormal_ch2_hon16}
\begin{center}
{{(a)}~\small
\begin{tabular}{|l|c|c|}\hline
 $n $ & $I_n$  & Preconditioner

\\
\hline
128 & 14219 & 75\\
256 & 78645 & 96\\
512 & $>$100000 & 145\\
1024 & $>$100000 & 110\\
\hline
\end{tabular}}\\
\vspace{0.5cm}

{{(b)}~\small
\begin{tabular}{|l|c|c|}\hline
 $n $ & $I_n$  & Preconditioner 
\\

\hline
128 & 128 & 21\\
256 & 248 & 23\\
512 & 477 & 25\\
1024 & 891 & 27\\
\hline
\end{tabular}}
\end{center}
\end{table}

\newpage

Example 3: We next consider the matrix sine functions. Table \ref{tab:SinMINRES_ch2_hon16} {(a)} shows the numerical results using MINRES for $\sin{A_n[f]}$ with $A_n[f]$ generated by ${f(\theta) = -(\frac{\theta^2}{2\pi}+\frac{1}{10^3})}$. As the matrix in this case is symmetric negative definite, we also show numerical results using CG as a comparison in Table \ref{tab:SinMINRES_ch2_hon16} {(b)}.

\begin{table}[h]
\caption{Numbers of iterations with {(a)} MINRES and {(b)} CG for $\sin{A_n[f]}$ with $A_n[f]$ generated by ${f(\theta) = -(\frac{\theta^2}{2\pi}+\frac{1}{10^3})}$.}
\label{tab:SinMINRES_ch2_hon16}
\begin{center}
{(a)}~{\small
\begin{tabular}{|l|c|c|}\hline
 $n $ & $I_n$  & $|\sin{c_n[f]}|$ 
\\

\hline
128 & 138 & 15 \\
256 & 227 & 14 \\
512 & 238 & 11 \\
1024 & 243 & 10 \\
\hline
\end{tabular}}\\
\vspace{0.5cm}

{(b)}~{\small
\begin{tabular}{|l|c|c|}\hline
 $n $ & $I_n$  & $\sin{c_n[f]}$ 
\\
\hline
128 & 138 & 16\\ 
256 & 235  & 14\\ 
512 & 253 & 11\\ 
1024 & 255 & 10\\
\hline
\end{tabular}}

\end{center}
\end{table}

\newpage

Example 4: Table \ref{tab:SinNormal_ch2_hon16} shows the numerical results for $(\sin{A_n[f]})^*\sin{A_n[f]}$ with $A_n[f]$ generated by $f(\theta) = -(\frac{\theta^2}{2\pi}\mathbf{i} + \frac{1}{10^3})$. Since the normalised matrices are highly ill-conditioned, CG without preconditioner requires large numbers of iteration to get the solutions to the desired tolerance. However, the numbers of iterations are reduced significantly with our proposed preconditioner.

\begin{table}[h]
\caption{Numbers of iterations with {(a)} CG for $(\sin{A_n[f]})^*\sin{A_n[f]}$ and {(b)} GMRES for $\sin{A_n[f]}$ with $A_n[f]$ generated by ${f(\theta) = -(\frac{\theta^2}{2\pi}\mathbf{i} + \frac{1}{10^3})}$.}
\label{tab:SinNormal_ch2_hon16}
\begin{center}
{(a)}~{\small
\begin{tabular}{|l|c|c|}\hline
 $n $ & $I_n$  & Preconditioner
\\
\hline
128 & 1094 & 31\\
256 & 3238 & 27\\
512 & 4844 & 22\\
1024 & 10152 & 16\\
\hline
\end{tabular}}\\
\vspace{0.5cm}

{(b)}~{\small
\begin{tabular}{|l|c|c|}\hline
 $n $ & $I_n$  & Preconditioner 
\\
\hline
128 & 128 & 16\\ 
256 & 255 & 17\\
512 & 384 & 13\\ 
1024 & 463 & 10\\
\hline
\end{tabular}}
\end{center}
\end{table}

\newpage

Example 5: Lastly, we consider the matrix cosine functions. Table \ref{tab:CosMINRES_ch2_hon16} and \ref{tab:CosNormal_ch2} show the numerical results for symmetric matrix $\cos{A_n[f]}$ with $A_n[f]$ generated by $f(\theta)= (\frac{\pi}{2}-\frac{1}{10^4})\cos{(\theta^2)}-\frac{\pi}{4}$ and for $(\cos{A_n[f]})^*\cos{A_n[f]}$ with $A_n[f]$ generated by $f(\theta) = (\frac{\pi}{2}-\frac{1}{10^4})\cos{(\theta^2})+\frac{\theta}{\pi}\mathbf{i}$, respectively. In Figure \ref{fig:cos_512_MINRES_ch2_hon16} {(a)} and {(b)}, we also show the spectrum of the matrices before and after applying the preconditioner $|\cos{A_n[f]}|$. In the zoom-in spectrum shown in Figure \ref{fig:cos_512_MINRES_ch2_hon16} {(c)}, we observe that the eigenvalues of the preconditioned matrix are mostly $\pm1$. We conclude that our proposed preconditioners appear effective for these systems defined by matrix cosine functions of Toeplitz matrices.

\begin{table}[h]
\caption{Numbers of iterations with MINRES for $\cos{A_n[f]}$ with $A_n[f]$ generated by ${f(\theta) = (\frac{\pi}{2}-\frac{1}{10^4})\cos{(\theta^2)}-\frac{\pi}{4}}$.}
\label{tab:CosMINRES_ch2_hon16}
\begin{center}
{\small
\begin{tabular}{|l|c|c|}\hline
 $n $ & $I_n$  & $|\cos{c_n[f]}|$
\\
\hline
128 & 77 & 30 \\
256 & 139 & 36 \\
512 & 261 & 38 \\
1024 & 506 & 42 \\
\hline
\end{tabular}}
\end{center}
\end{table}

\begin{table}[h]
\caption{Numbers of iterations with {(a)} CG for $(\cos{A_n[f]})^*\cos{A_n[f]}$ and {(b)} GMRES for $\cos{A_n[f]}$ with $A_n[f]$ generated by ${f(\theta) = (\frac{\pi}{2}-\frac{1}{10^4})\cos{(\theta^2})}+\frac{\theta}{\pi}\mathbf{i}$.}
\label{tab:CosNormal_ch2}
\begin{center}
{(a)}~{\small
\begin{tabular}{|l|c|c|}\hline
 $n $ & $I_n$  & Preconditioner 
\\
\hline
128 & 191 & 21 \\
256 & 435 & 22 \\
512 & 973 & 22 \\
1024 & 2092 & 23 \\
\hline
\end{tabular}}\\
\vspace{0.5cm}

{(b)}~{\small
\begin{tabular}{|l|c|c|}\hline
 $n $ & $I_n$  & Preconditioner 
\\
\hline
128 & 126 & 16\\
256 & 249 & 16\\
512 & 492 & 17\\
1024 & 972 & 17\\
\hline
\end{tabular}}

\end{center}
\end{table}

\begin{figure}[htbp]
\centering
\subfloat[]{\includegraphics[scale=0.7]{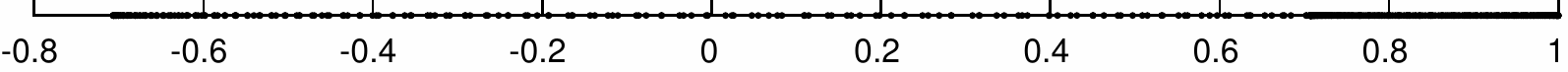}}\\
\subfloat[]{\includegraphics[scale=0.7]{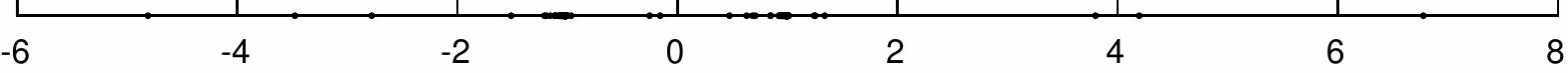}}\\
\subfloat[]{\includegraphics[scale=0.7]{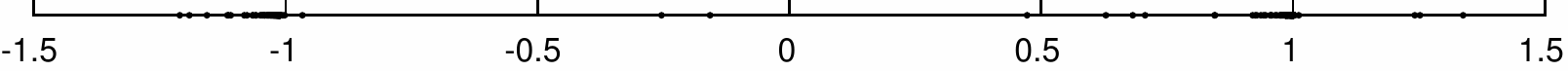}}\\
\caption{The spectrum of {(a)} $\cos{A_n[f]}$ and that of {(b)} ${|\cos{c_n[f]}|}^{-1}\cos{A_n[f]}$. {(c)} The zoom-in spectrum of {(b)}. $A_n[f]$ is generated by ${f(\theta) = (\frac{\pi}{2}-\frac{1}{10^4})\cos{(\theta^2)}-\frac{\pi}{4}}$ and $n=512$.}
\label{fig:cos_512_MINRES_ch2_hon16}
\end{figure}

%

\medskip

\bibliographystyle{plain}
\bibliography{SeanReferences}

\end{document}